 \newcommand{\mydist}{\pi}
\newcommand{\R}{\mathbb R}
\newcommand{\beq}{\begin{equation}}
\newcommand{\eeq}{\end{equation}}
\newcommand{\delim}{\; : \;}
\newcommand{\N}{\mathbb N}
\newcommand{\Var}{\mathrm{Var}}
\newcommand{\Ind}{\mathbbm{1}}
\newcommand{\Em}{\mathbb E}
\newcommand{\Pm}{\mathbb P}
\newcommand{\e}{\text{e}}
\newcommand{\bX}{\mathbf{X}}
\newcommand{\bx}{\mathbf{x}}
\newcommand{\bZ}{\mathbf{Z}}
\newcommand{\bY}{\mathbf{Y}}
\newcommand{\bw}{\mathbf{w}}
\newcommand{\scX}{\mathscr{X}}
\newcommand{\scZ}{\mathscr{Z}}
\newcommand{\di}{\text{d}}
\def\qed{\halmos}
\newcommand{\halmos}{\vspace{3mm} \hfill \mbox{$\Box$}}
\newcommand{\Ct}[1]{\widehat{Z}_{#1}}
\newcommand{\Ht}[1]{\widehat{\Phi}_{#1}}
\newcommand{\Pt}[1]{\widehat{P}_{#1}}
\newcommand{\Rt}[1]{\widehat{R}_{#1}}
\newcommand{\Hfunc}{\varphi}
\newcommand{\fmc}[1]{\widehat{\mu}_{#1}}
\newcommand{\pmc}[1]{\widehat{\nu}_{#1}}
\newcommand{\mypi}[1]{{g}_{#1}}
\newcommand{\wmin}{\underline{w}}
\newcommand{\wmax}{\overline{w}}
\newcommand{\bset}{\scX_b}
\newcommand{\bsetcard}{\left| \bset \right|}
\newcommand{\bwset}{\scX_{b-w_1}}
\newcommand{\bwsetcard}{\left| \bwset \right|}
\newcommand{\bwsubset}{\bset^{(w_1)}}
\newcommand{\bsubset}{\bset^{(-w_1)}}
\newcommand{\bminuswsubset}{\bwset^{(w_1)}}
\newcommand{\bminussubset}{\bwset^{(-w_1)}}
\newcommand{\perf}{S}
\newcommand{\treshold}{\gamma}
\newtheorem{remark}{Remark}
\newtheorem{theorem}{Theorem}
\newtheorem{proof}{Proof}
\newtheorem{definition}{Definition}
\newtheorem{proposition}{Proposition}
\newtheorem{lemma}{Lemma}
\begin{document}

\title{Stratified Splitting for Efficient Monte Carlo\\ Integration}

\author[]{Radislav Vaisman}
\author[]{Robert Salomone}
\author[]{Dirk P. Kroese}

\affil[]{\small School of Mathematics and Physics, The University of Queensland \\ 
 Brisbane, Australia \\ \vspace{1em} \footnotesize
 r.vaisman@uq.edu.au \\  robert.salomone@uqconnect.edu.au \\ kroese@maths.uq.edu.au}

%\author{Radislav Vaisman, Robert Salomone, and Dirk P. Kroese}
% {This work was supported by the Australian Research Council Centre of Excellence for Mathematical \& Statistical Frontiers, under grant number CE140100049.}
% }

\date{}
% %\titlerunning{Short form of title}        % if too long for running head

% \author{Radislav Vaisman \and Robert Salomone  \and Dirk P. Kroese}

% %\authorrunning{Short form of author list} % if too long for running head

% \institute{Radislav Vaisman \and Robert Salomone \and Dirk P. Kroese \at
%               School of Mathematics and Physics, The University of Queensland \\
%               Brisbane, 4072, Australia \\ 
%               \email{r.vaisman@uq.edu.au}\\ 
%               \email{robert.salomone@uqconnect.edu.au} \\
%               \email{kroese@maths.uq.edu.au}           %  \\   
% }

% \date{Received: date / Accepted: date}
% % The correct dates will be entered by the editor
\maketitle
\begin{abstract}
  The efficient evaluation of high-dimensional integrals is of importance in both theoretical and practical fields of science, such as data science, statistical physics,  and machine learning.
  %However, exact computation methods suffer from the curse of dimensionality. 
  However, due to the curse of dimensionality,
  deterministic numerical methods are inefficient in high-dimensional
  settings. Consequentially, for many practical problems one must
  resort to Monte Carlo estimation. In this paper, we introduce a
  novel Sequential Monte Carlo technique called Stratified
  Splitting. The method provides unbiased estimates and can handle
  various integrand types including indicator functions, which are
  used in rare-event probability estimation problems. Moreover,
  we demonstrate that a variant of the algorithm can achieve polynomial
  complexity. 
The results of our numerical experiments suggest that
  the Stratified Splitting method is capable of delivering accurate
  results for a variety of integration problems. 
% \PACS{PACS code1 \and PACS code2 \and more}
% \subclass{MSC code1 \and MSC code2 \and more}
\end{abstract}

\section{Introduction}
We  consider the evaluation of expectations and integrals of the form
\beq\label{eq.integral}
	\Em_f \left[ \Hfunc(\bX) \right]=  \sum_{\scX} \Hfunc({\bf x}) f({\bf x}) \quad \text{or} \quad \Em_f \left[ \Hfunc(\bX) \right]=\int_{\scX} \Hfunc({\bf x}) f({\bf x})\, \di {\bf x}, \nonumber
\eeq
where $\bX \sim f$ is a random variable   taking values in a set ${\scX} \subseteq \R^d$,  $f$ is a probability density function (pdf) with respect to the Lebesgue or counting measure, and  $\Hfunc:\scX \rightarrow \R$ is a real-valued function. 

The evaluation of such high-dimensional integrals is of critical importance in many scientific areas, including  statistical inference \citep{Gelman}, rare-event probability estimation \citep{asmussen2007stochastic}, machine learning \citep{Russell2009,Koller2009PGM1795555}, and cryptography \citep{mcgrayne2011theory}. An important application is the  calculation of the normalizing constant of a probability distribution, such as the marginal likelihood (model evidence).  However, often obtaining even a reasonably accurate estimate of $\Em_f \left[ \Hfunc(\bX) \right]$ can be hard \citep{rubinstein17}.  
%Due to its importance, the high-dimensional integration problem has
%been considered extensively in the past.  
Deterministic computation methods that use
Fubini's theorem \citep{friedman1980} and quadrature rules or
extrapolations \citep{opac-b1079222} suffer from the curse of
dimensionality, with the number of required function evaluations
growing exponentially with the dimension. Many other deterministic and
randomized methods have been 
proposed to estimate high-dimensional integrals. Examples include Bayesian
quadrature, sparse grids, and various Monte Carlo, quasi-Monte Carlo,
Nested Sampling, 
and Markov Chain Monte Carlo (MCMC) algorithms
\citep{OHAGAN1991245,Morokoff1995218,
  newman1999monte,Heiss200862,skilling2006, Kroese2011}. 
There are also procedures
based on the sequential Monte Carlo (SMC) approach \citep{RSSB:RSSB553} that provide
consistent and unbiased estimators that have asymptotic
normality. However, many popular
methods are
not unbiased and some are not known to be consistent. For example,
Nested Sampling produces biased estimates and even its consistency, 
when Markov Chain Monte Carlo (MCMC) is used, remains an open problem
~\citep{ChopinRobert2010}.  
An alternative criterion that one might use to describe the efficiency
of algorithms is {\em computational complexity}, which often used in a computer
science context. Here, an algorithm is considered to be efficient if
it scales polynomially (rather than
exponentially) in the size of the problem.  

In this paper, we propose a novel SMC
approach for reliable and fast estimation of high-dimensional
integrals.  Our method extends the Generalized Splitting (GS)
algorithm of \cite{Botev2012}, to allow the estimation of quite
general integrals.  In addition, our algorithm is specifically
designed to perform efficient sampling in regions of ${\scX}$ where
$f$ takes small values and $\Hfunc$ takes large values.
 %, which we call {\em importance} regions.
 In particular, we present a way of implementing stratification for
 variance reduction in the absence of knowing the strata
 probabilities. A benefit of the proposed  Stratified Splitting
 algorithm (SSA) is that it provides an  unbiased  estimator of $\Em_f
 \left[ \Hfunc(\bX) \right]$, and that it can be analyzed in a
 non-asymptotic setting. In particular, we prove that a simplified 
 version of SSA can provide polynomial complexity, under certain
 conditions.
We give a specific example of a $\#$P complete problem where
polynomial efficiency can be achieved, providing a polynomial bound on
the number of samples required to
achieve a predefined error bound. 
% Among alternative methods, we distinguish  the Nested Sampling (NS)
% algorithm of \cite{}, the Annealed Importance Sampling
% (AIS) method of \cite{Neal2001}, and the Power posterior approach of
% \cite{FrP08}, for their practical performance and  high popularity
% \citep{MurrayPotts,:/content/aip/proceeding/aipcp/10.1063/1.4819989,RSSB:RSSB736}. As
% always, due to the varied approaches of different methods and nuances
% of different problems, no individual method can be deemed universally
% better. For example, despite  good practical performance and
% convergence in probability to the true integral value, the NS
% algorithm is not unbiased and in fact, to ensure its consistency, both
% sample size and ratio of sampling iterations to sample population size
% should be infinite for certain classes of integrands
% \citep{Evans2007}. Moreover, consistency of estimates obtained with
% Nested Sampling .

The SSA uses a  stratified sampling scheme (see,
e.g., \cite{rubinstein17}, Chapter 5), defining a partition of
the state space into strata, and using the law of total probability to
deliver an estimator of the value of the integral.  To do so, one
needs to obtain a sample population from each strata and know the
exact probability of each such strata. Under the classical stratified
sampling framework, it is assumed that the former is easy to achieve
and the latter is known in advance. However, such favorable scenarios
are rarely seen in practice. In particular, obtaining samples from
within a stratum and estimating the associated probability that a
sample will be within this stratum is hard in general
\citep{JerrumValiantVazirani}. To resolve this issue, the SSA
incorporates a multi-level splitting mechanism
\citep{Kahn1951,Botev2012,Rubinstein2010,Duan2017} and uses an
appropriate MCMC method to sample from conditional densities
associated with a particular~stratum.

%\smallskip
The rest of the paper is organized as follows. In Section
\ref{sec.ssa} we introduce the SSA, explain its correspondence to a
generic multi-level sampling framework, and prove that the SSA
delivers an unbiased estimator of the expectation of interest.  In
Section \ref{sec.analysis}, we provide a rigorous complexity analysis of the
approximation error of the proposed method, under a simplified
setting.
  In Section
\ref{sec.exact.example}, we introduce a challenging estimation problem
called the weighted component model, and demonstrate that SSA can
provide an arbitrary level of precision for this problem with
polynomial complexity in the size of the problem.  In Section
\ref{sec.numerics}, we report our numerical findings on various test
cases that typify classes of problems for which the SSA is of
practical interest. Finally, in Section \ref{sec.conclusion} we
summarize the results and discuss possible directions for future
research. Detailed proofs are given in the appendix.

\section{Stratified splitting algorithm}\label{sec.ssa}

\subsection{Generic multilevel splitting framework}

We begin by considering a very generic multilevel splitting  framework, similar to \citep{RSSB:RSSB280}. Let $\bX \sim f$ be a random variable   taking values in a set ${\scX}$. Consider a decreasing sequence of sets  ${\scX} = {\scX}_0  \supseteq \cdots  \supseteq {\scX}_{n}=\emptyset$ and define
${\scZ}_t  = {\scX}_{t-1} \setminus {\scX}_{t}$, for $t=1,\ldots,n$. Note that ${\scX}_{t-1} =  \bigcup_{i=t}^{n}{{\scZ}_i}$ and that $\{ {\scZ}_t \}$  yields a partition of ${\scX}$; that is,
\beq\label{eq.partition}
	{\scX} = \bigcup_{t=1}^{n} {\scZ}_t, \quad {\scZ}_{t_1} \cap {\scZ}_{t_2} = \emptyset \quad \mathrm{for} \quad 1 \leq t_1 <t_2 \leq n.
\eeq
We can define a sequence of conditional pdfs
\begin{align}\label{eq.conditional1}
	f_t(\bx) &= f(\bx \mid \bx \in {\scX_{t-1}})
	 =  \frac{f(\bx) \Ind{\{ \bx \in \scX_{t-1} \}}}{\Pm_f(\bX \in \scX_{t-1})}   \quad \text{for} \quad t=1,\ldots,n,
\end{align}
where $\Ind$ denotes the indicator function. Also, define
\beq\label{eq.conditional2}
	\mypi{t}(\bx) = f\left(\bx \mid \bx \in {\scZ}_t \right)   =  \frac{f(\bx) \Ind{\{ \bx \in \scZ_t \}}}{\Pm_f(\bX \in \scZ_t)}   \quad \text{for} \quad t=1,\ldots,n.
\eeq

Our main objective is  to  sample from the pdfs $f_t$ and $g_t$ in \eqref{eq.conditional1} and \eqref{eq.conditional2}, respectively. To do so, we first formulate a  generic multilevel splitting framework,  given in Algorithm \ref{alg.gen.split}.
%\medskip

\begin{algorithm}[h]
{%\footnotesize
\SetAlgoSkip{}
\DontPrintSemicolon
\SetKwInOut{Input}{input}\SetKwInOut{Output}{output}
\Input{${\scX}_0,\ldots,{\scX}_{n}$ and $\{f_t, \mypi{t}\}_{1 \leq t \leq n}$.} % ${\scZ}_1,\ldots,{\scZ}_{n}$
\Output{Samples from $f_t$ and $\mypi{t}$ for $1 \leq t \leq n$.}
%Set $\Rt{0} \leftarrow 1$ and  ${\cal X}_1 \leftarrow \emptyset$.\\
Create a multi-set ${\cal X}_1$ of samples from $f_1$.\\
\For{$t = 1$ \KwTo $n$}
	{		
		Set ${\cal Z}_t \leftarrow {\cal X}_{t} \cap \scZ_t$.  \\
		Set ${\cal Y}_{t} \leftarrow {\cal X}_{t} \setminus {\cal Z}_t$.\\
		\If{$t < n$}{
		
		Create a multi-set  ${\cal X}_{t+1}$ of samples (particles) from $f_{t+1}$, (possibly) using elements of the ${\cal Y}_{t}$ set. \tcc{This step is called the splitting or the rejuvenation step.}
		}
	}
\KwRet multi-sets  $\{{\cal X}_t\}_{1 \leq t \leq n}$, and $\{{\cal Z}_t\}_{1 \leq t \leq n}$.
\caption{\footnotesize{Generic multilevel splitting framework} \label{alg.gen.split}}
}
\end{algorithm}

%\medskip

Note that the samples in $\{{\cal X}_t\}_{1 \leq t \leq n}$ and $\{{\cal Z}_t\}_{1 \leq t \leq n}$ are distributed according to $f_{t}$ and $\mypi{t}$, respectively, and these samples can be used to handle several tasks. In particular, the $\{{\cal X}_t\}_{1 \leq t \leq n}$ sets allow one to handle the general non-linear Bayesian filtering problem \citep{BootstrapFilter}. Moreover, by tracking the  cardinalities of   the sets $\{{\cal X}_t\}_{1 \leq t \leq n}$ and $\{{\cal Z}_t\}_{1 \leq t \leq n}$, one is able to tackle  hard rare-event probability estimation problems, such as delivering  estimates of $\Pm_f\left(\bX \in \scZ_{n} \right)$ \citep{Botev2012,Kroese2011,Vaisman1}. %However, it is important to note that \cite{RSSB:RSSB280} recognize that the resample-move algorithm is also applicable for handling a process' tail events.
Finally, it was recently shown by \cite{Vaisman20161} that Algorithm \ref{alg.gen.split} can be used as a powerful variance minimization technique for any general SMC procedure.
In light of the above, we propose taking further advantage of the sets $\{{\cal X}_t\}_{1 \leq t \leq n}$ and $\{{\cal Z}_t\}_{1 \leq t \leq n}$, to obtain an estimation method suitable for general  integration problems.

\subsection{The SSA set-up}\label{sec.ssa.setup}

Following the above multilevel splitting framework, it is convenient
to construct the sequence of sets $\{{\scX}_t\}_{0 \leq t \leq n}$ by
using a performance function $S: {\scX} \rightarrow \R$, in such a way
that $\{{\scX}_t\}_{0 \leq t \leq n}$ can be written as {\em super}
level-sets of $S$ for chosen levels $\gamma_0,\ldots,\gamma_n$, where
$\gamma_0$ and $\gamma_n$ are equal to $\inf_{\bx \in {\scX}}S(\bx)$
and $\sup_{\bx \in {\scX}} S(\bx)$, respectively.
%\[
%\inf_{\bx \in {\scX}}S(\bx) =  \gamma_0  \leq \cdots  \leq \gamma_n = \sup_{\bx \in {\scX}} S(\bx).
%\]
In particular, ${{\scX}_t = \left\{ \bx \in {\scX} : S(\bx) \geq
\gamma_{t} \right\}}$ for $t=0,\ldots,n$. The partition
$\{{\scZ}_t\}_{1 \leq t \leq n}$, and the densities $\{f_t\}_{1 \leq t
  \leq n}$ and $\{\mypi{t}\}_{1 \leq t \leq n}$, are defined as before
via \eqref{eq.partition}, \eqref{eq.conditional1}, and
\eqref{eq.conditional2}, respectively. Similarly, one can define a
sequence of {\em sub} level-sets of $S$; in this paper we use the
latter for some cases and whenever appropriate.

% We note that, alternatively, $\{{\scX}_t\}_{0 \leq t \leq n}$  can be defined as  {\em sub}level-set of $S$  for levels $ \gamma_0  \geq \cdots  \geq \gamma_n $. However, without loss of generality, we continue with the {\em super}level-set~case.

Letting $z_t \stackrel{\tiny\mathrm{def}}{=} \Em_f\left[\Hfunc\left(\bX\right) \mid \bX \in  {\scZ}_t \right] \Pm_f \left(\bX \in  {\scZ}_t \right)$  for~$t=1,\ldots, n$, and combining \eqref{eq.partition}  with the law of total probability, we arrive at
\beq \label{eq.cond.exp.analitical}
	z \stackrel{\tiny\mathrm{def}}{=} \Em_f\left[\Hfunc\left(\bX\right)\right] = \sum_{t=1}^{n} { \Em_f\left[\Hfunc \left(\bX\right) \mid \bX \in  {\scZ}_t \right] \Pm_f\left(\bX \in  {\scZ}_t \right)} = \sum_{t=1}^{n}{z_t}.
\eeq

The  SSA proceeds with the  construction of  estimators $\Ct{t}$  for $z_t$ for  $t=1,\ldots, n$ and,  as soon as these are available, we can use \eqref{eq.cond.exp.analitical} to deliver the SSA  estimator for $z$, namely
%\[
$\Ct{} = \sum_{t=1}^{n} {\Ct{t}}$.
%\]

For $1 \leq t \leq n$, let $\varphi_t\stackrel{\tiny\mathrm{def}}{=}\Em_f\left[\Hfunc\left(\bX\right) \mid \bX \in  {\scZ}_t \right]$,
$p_t\stackrel{\tiny\mathrm{def}}{=}\Pm_f\left(\bX \in  {\scZ}_t \right)$, and let $\Ht{t}$ and $\Pt{t}$ be estimators of $\varphi_t$ and $p_t$, respectively. We define $\Ct{t} = \Ht{t}\,\Pt{t}$, and  recall that, under the multilevel splitting framework, we obtain the  sets $\{{\cal X}_t\}_{1 \leq t \leq n}$,  and  $\{{\cal Z}_t\}_{1 \leq t \leq n}$. These sets are sufficient to obtain unbiased estimators $\{\Ht{t}\}_{1 \leq t \leq n}$ and $\{\Pt{t}\}_{1 \leq t \leq n}$, in the following way.

\begin{enumerate}
	\item We define $\Ht{t}$ to be the (unbiased) Crude Monte Carlo (CMC) estimator of $\Hfunc_t$, that is,
	\[
		\Ht{t} = \frac{1}{|{\cal Z}_t|} \sum_{\bZ \in {\cal Z}_t} { \Hfunc(\bZ) } \quad \text{for all} \;\; t=1,\ldots,n.
	\]
	
	\item\label{rt.explain} The estimator $\Pt{t}$ is defined  similar to the one used in the  Generalized Splitting (GS) algorithm of \cite{Botev2012}. In particular, the GS product estimator is defined as follows.  Define the level entrance probabilities $r_0 \stackrel{\tiny\mathrm{def}}{=} 1$, $r_t \stackrel{\tiny\mathrm{def}}{=}  \Pm_f \left(\bX \in \scX_{t} \mid \bX \in \scX_{t-1} \right)$ for $t=1,\ldots,n$, and note that $\Pm_f \left(\bX \in \scX_{t} \right) = \prod_{i=0}^{t}{r_i}$. Then, for $t= 1,\ldots, n$, it holds that
	\begin{align*}
		p_t &= \Pm_f\left(\bX \in \scZ_t \right) = \Pm_f \left(\bX \in \scX_{t-1} \right) - \Pm_f \left(\bX \in \scX_{t} \right) \\
		& = \prod_{i=0}^{t-1}{r_i} - \prod_{i=0}^{t}{r_i}
		 = (1-r_{t})\,\prod_{i=0}^{t-1}{r_i}.
	\end{align*}
This suggests the estimator
$\Pt{t} = (1-\Rt{t})\prod_{i=0}^{t-1}\Rt{i}$
 for $p_t$, where $\Rt{0}\stackrel{\tiny\mathrm{def}}{=} 1$ and  $\Rt{t} = \frac{|{\cal Y}_{t}|}{|{\cal X}_{t}|} = \frac{|{\cal X}_{t} \setminus {\cal Z}_t|}{|{\cal X}_{t}|}$ for all $t=1,\ldots,n$.
\end{enumerate}
In practice, obtaining the $\{{\cal X}_t\}_{1 \leq t \leq n}$ and $\{{\cal Z}_t\}_{1 \leq t \leq n}$ sets requires
 sampling  from the conditional
pdfs  in \eqref{eq.conditional1} and \eqref{eq.conditional2}. However, for many specific
applications, designing such a procedure can be extremely
challenging. Nevertheless,  we can use the
$\mathcal{Y}_{t}={\cal X}_{t} \setminus {\cal Z}_t$ set from iteration
$t$ to sample $\mathcal{X}_{t+1}$ from $f_{t+1}$ for each
$t=1,\ldots,n-1$, via MCMC. In particular, the particles from the
$\mathcal{Y}_{t}$ set can be ``split'', in order to construct the
desired set $\mathcal{X}_{t+1}$  for the next iteration, using a
Markov transition kernel $\kappa_{t+1}\left(\cdot \mid \cdot  \right)$
whose stationary pdf is  $f_{t+1}$, for each~$t=1,\ldots,n-1$.
Algorithm \ref{alg.SSA} summarizes the general procedure for the~SSA. 
 The particular splitting step described in Algorithm \ref{alg.SSA} is
 a popular choice \cite{Botev2012, Botev2008, Rubinstein2009},
 especially for hard problems with unknown convergence behavior of the
 corresponding Markov chain. 

\begin{algorithm}[H]
  {\footnotesize
  \SetAlgoSkip{}
  \DontPrintSemicolon
  \SetKwInOut{Input}{input}\SetKwInOut{Output}{output}
  \Input{A set ${\scX}$, a pdf $f$, the functions $\Hfunc \delim {\scX} \rightarrow \R$ and $S \delim {\scX} \rightarrow \R$, a  sequence of levels $\gamma_0,\ldots,\gamma_{n}$,  and the sample size $N \in \N$.}
  \Output{$\Ct{}$ --- an  estimator of $z = \Em_f \left[ \Hfunc(\bX) \right]$.}
  Set $\Rt{0} \leftarrow 1$,  ${\cal X}_1 \leftarrow \emptyset$, and $f_1 \leftarrow f$.\\
  \For{$i = 1$ \KwTo $N$}{draw $\bX \sim f_1(\bx)  $ and  add $\bX$ to ${\cal X}_1$.}	
  \For{$t=1$ \KwTo $n$}
  {
  %\tcc{Obtaining $\Ct{t}$}
   Set $\mathcal{Z}_t \leftarrow \left\{ \mathbf{X} \in  \mathcal{X}_{t} \;:\;  \mathbf{X} \in {\scZ}_t  \right\}$ and  $\mathcal{Y}_{t} \leftarrow {\scX}_{t} \setminus \mathcal{Z}_t$. \\
   Set $\Ht{t} \leftarrow \frac{1}{|\mathcal{Z}_t|} \sum_{\mathbf{X} \in  \mathcal{Z}_t}{\Hfunc(\mathbf{X})}$.\\
   Set $\Rt{t} \leftarrow  \frac{|\mathcal{Y}_{t}|}{N} $, and
    $\Pt{t}\leftarrow\left(1-\Rt{t}\right)\prod_{j=0}^{t-1}{\Rt{j}}$.\\
    Set $\Ct{t} \leftarrow \Ht{t} \, \Pt{t}$.\\
  %$t \leftarrow t+1$\\
  \If{$t < n$}{
  \tcc{Performing splitting to obtain ${\cal X}_{t+1}$. }
  %\If{$t<n$}{
  Set $\mathcal{X}_{t+1} \leftarrow \emptyset$ and draw  $K_{i}  \sim   \mathrm{\sf Bernoulli}(0.5)$, for $i=1,\ldots, \left| \mathcal{Y}_{t} \right|$, such that  $\sum_{i=1}^{ \left| \mathcal{Y}_{t} \right|} K_{i} = N \; \mathrm{mod} \; \left| \mathcal{Y}_{t} \right|$. \\
  \For{$\bY \in  \mathcal{Y}_{t}$}{
    
     Set $M_i  \leftarrow  \left\lfloor \frac{N}{\left| \mathcal{Y}_{t} \right|}  \right \rfloor + K_{i}$ and $\bX_{i,0} \leftarrow \bY$.\\
    
    \For{$j=1$ \KwTo $M_{i}$}{		
         Draw $\bX_{i,j} \sim \kappa^{\tau}_{t+1}\left(\cdot \mid \bX_{i,j-1} \right)$ (where $\kappa^{\tau}_{t+1}(\cdot \mid \cdot)$ is a $\tau$-step transition kernel using $\kappa_{t+1}(\cdot \mid \cdot)$), and add $\bX_{i,j}$ to $\mathcal{X}_{t+1}$.
        }
    }
  %}
  }
  }
  \KwRet  $\Ct{} = \sum_{t=1}^n{\Ct{t}}$.
  
  \caption{\footnotesize{SSA for estimating $z=\Em_f\left[\Hfunc\left(\bX\right)\right]$} \label{alg.SSA}}
  }
  \end{algorithm}

\begin{remark}[Independent SSA]\label{rem.split.strategy} \normalfont
  Obviously, the SSA  above produces dependent samples in sets ${\cal X}_t$
  and ${\cal Z}_t$.
It will be convenient to also consider a version of SSA where the
samples are {\em independent}. This can for example be achieved by
simulating multiple independent runs of SSA, until $N$ realizations
$\bX \in {\cal X}_t$ have been produced.
Of course this {\em Independent SSA} (ISSA) variant is much less efficient than the original algorithm,
but the point is that the computational effort (the run time complexity) 
 remains polynomial --- $\mathcal{O}(C^2 n)$ versus $\mathcal{O}(C n)$, where $C$ is the
 complexity of SSA for each level. 
Hence, from a {\em complexity} point of view we may as well consider the
independent SSA variant. This is what will be done in Theorem~\ref{thm.mcmc}.

%% For example, we can choose a single element
 %% from ${\cal Y}_t$ and use it to obtain all samples in the ${\cal
 %%   X}_{t+1}$ set. Note that, in this case, ${\cal X}_{t+1}$ contains
 %% dependent samples. On the other hand, one might be interested to have
 %% {\em independent} samples in the ${\cal X}_{t+1}$ set.  To do so, one
 %% will generally perform additional runs of the SSA and take a single
 %% element (from each SSA run) from ${\cal Y}_t$ to produce a
 %% corresponding sample in the ${\cal X}_{t+1}$ set. Such a strategy is
 %% clearly more expensive computationally. Under this setting, a single
 %% SSA run will require a computational effort that is proportional to
 %% that of Algorithm \ref{alg.SSA}, squared. However, such an approach
 %% is beneficial for an analysis of the SSA's convergence.  See also
 %% Remark~\ref{rem.split.
  % independence2}.%,
  \end{remark}
  \begin{theorem}[Unbiased estimator]\label{thm.unbiased}  Algorithm \ref{alg.SSA} outputs an unbiased estimator; that is, it holds that
  $\Em\left[\Ct{}\right] = \Em_f \left[\Hfunc \left(\bX \right) \right]=z.$
  \end{theorem}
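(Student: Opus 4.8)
The plan is to use linearity of expectation to reduce the claim to a single level and then exploit an algebraic cancellation that removes the random denominator. Since $\Ct{} = \sum_{t=1}^n \Ct{t}$ and, by \eqref{eq.cond.exp.analitical}, $z = \sum_{t=1}^n z_t$ with $z_t = \varphi_t p_t$, it suffices to show $\Em[\Ct{t}] = z_t$ for each $t$. The first step I would take is to note that the splitting step always restores the population to exactly $N$ particles, so $|\mathcal{X}_t| = N$ and $\mathcal{X}_t = \mathcal{Y}_t \cup \mathcal{Z}_t$; hence $1 - \Rt{t} = |\mathcal{Z}_t|/N$, and the factor $|\mathcal{Z}_t|$ cancels the denominator of $\Ht{t}$. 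This gives the clean form
\[
\Ct{t} = \Ht{t}\,\Pt{t} = \frac{1}{N}\left(\sum_{\bZ \in \mathcal{Z}_t} \Hfunc(\bZ)\right)\prod_{j=0}^{t-1}\Rt{j},
\]
which dissolves the intra-level dependence between $\Ht{t}$ and $\Pt{t}$ and, importantly, exhibits the weight $W_{t-1} := \prod_{j=0}^{t-1}\Rt{j}$ as a function of the occupation counts $|\mathcal{Y}_0|,\dots,|\mathcal{Y}_{t-1}|$ alone, not of particle positions.

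A preliminary lemma I would establish, by induction on $t$, is that every particle of $\mathcal{X}_t$ has marginal law $f_t$. The base case holds since $\mathcal{X}_1$ is an i.i.d.\ sample from $f_1 = f$. For the step, $\mathcal{Y}_t = \mathcal{X}_t \cap \scX_t$, so conditioning an $f_t$-distributed particle on $\{\bX \in \scX_t\}$ yields $f_{t+1}$ by \eqref{eq.conditional1}; each such seed is then propagated by $\kappa^{\tau}_{t+1}$, whose \emph{stationary} density is $f_{t+1}$, so every offspring is again marginally $f_{t+1}$ \emph{for any finite $\tau$}. Via \eqref{eq.conditional2} this yields that a particle of $\mathcal{Z}_t$, conditioned on lying in $\scZ_t$, is distributed according to $\mypi{t}$, which is the distributional input needed below.

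With these in hand, the crux is to prove $\Em[\Ct{t}] = z_t$ by showing, via a level-by-level induction reminiscent of the unbiasedness of sequential Monte Carlo normalizing-constant estimators, the identity
\[
\Em\left[\,W_{t-1}\,\frac{1}{N}\sum_{\bX \in \mathcal{X}_t}\psi(\bX)\right] = \Pm_f(\bX \in \scX_{t-1})\,\Em_{f_t}[\psi(\bX)] = \int_{\scX_{t-1}}\psi(\bx)\,f(\bx)\,\di\bx
\]
for integrable $\psi$; taking $\psi = \Hfunc\,\Ind_{\{\,\cdot\, \in \scZ_t\}}$ and using $\scZ_t \subseteq \scX_{t-1}$ then gives $\Em[\Ct{t}] = \int_{\scZ_t}\Hfunc f = \varphi_t p_t = z_t$, and summing over $t$ finishes the proof. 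In the inductive step I would condition on the history through the creation of $\mathcal{X}_t$, under which $W_{t-1}$ and $\Rt{t} = |\mathcal{Y}_t|/N$ are measurable, and take the expectation generating $\mathcal{X}_{t+1}$: the branching counts satisfy $\Em[M_i \mid \text{history}] = N/|\mathcal{Y}_t|$, which cancels the $|\mathcal{Y}_t|$ carried by $\Rt{t}$, while the invariance of $\kappa_{t+1}$ makes $\Em_{f_{t+1}}[\kappa^{\tau}_{t+1}\psi] = \Em_{f_{t+1}}[\psi]$.

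The main obstacle is exactly the dependence that this conditioning must neutralize. The resampling ties the level-$t$ population to the ratios $\Rt{1},\dots,\Rt{t-1}$ inside $W_{t-1}$, and the rejuvenation keeps an \emph{entire} Markov chain $\bX_{i,1},\dots,\bX_{i,M_i}$ of varying depths, so the conditional mean of the level-$(t{+}1)$ sum a priori involves the position-dependent quantities $(\kappa^{j\tau}_{t+1}\psi)(\bX)$ for many $j$. The two structural facts that rescue the argument are (i) that $W_{t-1}$ depends only on the occupation counts, so conditioning on those counts fixes the weight without biasing the surviving particles, and (ii) that invariance forces every chain depth $j$ to contribute the \emph{same} value $\Em_{f_{t+1}}[\psi]$ in expectation, so the complicated depth-weighting washes out. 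Verifying carefully that these two facts combine --- for every level, not merely the i.i.d.\ first level --- to close the recursion is where the real work lies.
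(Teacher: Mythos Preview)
Your route is genuinely different from the paper's. The paper does not cancel the random denominator nor set up an SMC-style induction for general test functions. Instead it keeps $\Ht{t}$ and $\Pt{t}$ separate, conditions on the occupation counts $|\mathcal Z_0|,\dots,|\mathcal Z_t|$, observes that $\Pt{t}$ is measurable with respect to these so can be pulled outside the inner expectation, and asserts
\[
\Em\Bigl[\textstyle\sum_{\bX\in\mathcal Z_t}\Hfunc(\bX)\ \Big|\ |\mathcal Z_0|,\dots,|\mathcal Z_t|\Bigr]=|\mathcal Z_t|\,\varphi_t,
\]
on the grounds that the particles in $\mathcal Z_t$ ``still have the same distribution''. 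The identity $\Em[\Pt{t}]=p_t$ is then \emph{quoted} from the Generalized Splitting literature rather than re-derived. Your approach is more self-contained --- GS product unbiasedness is simply the case $\psi=\Ind_{\scX_t}$ of your induction hypothesis --- and the cancellation $(1-\Rt{t})=|\mathcal Z_t|/N$ is a clean simplification the paper does not use. Conversely, the paper's argument is much shorter precisely because it offloads the recursive part to a citation.

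On the step you flag as ``where the real work lies'': be careful with your fact (i). At level $1$ the particles are i.i.d.\ from $f_1$, so conditioning on $|\mathcal Y_1|$ does leave the survivors i.i.d.\ from $f_2$. But for $t\ge 2$ the population $\mathcal X_t$ is a union of correlated chain segments whose very lengths already depend on $|\mathcal Y_{t-1}|$, and conditioning on $|\mathcal Y_t|$ can bias the survivors away from $f_{t+1}$. Concretely, after conditioning on the full history $\mathcal F_t$ and averaging out the Bernoullis $K_i$, the level-$(t{+}1)$ contribution becomes
\[
W_{t-1}\,\frac{|\mathcal Y_t|}{N^2}\sum_{\bX\in\mathcal X_t}\Ind_{\scX_t}(\bX)\,G_\psi\bigl(\bX;|\mathcal Y_t|\bigr),
\qquad
G_\psi(\by;m)=\sum_{j=1}^{\lfloor N/m\rfloor}(\kappa_{t+1}^{j\tau}\psi)(\by)+\tfrac{N\bmod m}{m}\,(\kappa_{t+1}^{(\lfloor N/m\rfloor+1)\tau}\psi)(\by),
\]
and since $|\mathcal Y_t|$ depends on the whole population this is not of the form $W_{t-1}\frac{1}{N}\sum_{\bX}\tilde\psi(\bX)$ for a fixed $\tilde\psi$, so your induction hypothesis as stated does not apply directly. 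The paper's proof finesses exactly this point via the displayed conditional-expectation identity above, which is the same ``conditioning on counts does not bias positions'' heuristic you invoke in fact (i); so in substance both arguments rest on the same delicate claim, and neither spells it out fully.
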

  \begin{proof}
  See Appendix \ref{appendix.proofs}.
  \end{proof}
  %An immediate consequence of Theorem \ref{thm.unbiased} is that the SSA  introduces an  advantage over conventional SMC algorithms, which provide only  consistent  estimators.
%
We next proceed with a clarification for a few remaining practical
issues regarding the SSA.

\smallskip

\emph{Determining the SSA levels.} 
%Unlike the example from Section \ref{sec.exact.example}, 
It is often difficult to make an educated guess  how to set the values of the level thresholds. However, the SSA requires the values of $\{\gamma_t \}_{1 \leq t \leq n }$ to be known in advance, in order to ensure that the estimator is unbiased. To resolve this issue, we perform a single pilot run of Algorithm \ref{alg.SSA} using a  so-called rarity parameter $0 < \rho < 1$. In particular, given  samples from an ${\cal X}_t$ set, we  take the $\rho \, |{\cal X}_t|$ performance quantile as the value of the corresponding level $\gamma_t$, and  form the next level set. Such a pilot run helps  to establish a set of threshold values adapted to the specific problem. After the completion of the pilot run we simply continue with a regular execution of Algorithm~\ref{alg.SSA} using the level threshold values observed in the pilot run.
\smallskip

\emph{Controlling the SSA error.} A common practice when working with a Monte Carlo
algorithm that outputs an unbiased estimator, is to run it for $R$ independent replications to obtain $\Ct{}^{(1)},\ldots,\Ct{}^{(R)}$, and report
the average value. Thus, for a final estimator, we take
\[
	\Ct{} = {R^{-1}}\sum_{j=1}^R{\Ct{}^{(j)}}.
\]
 To measure the quality of the SSA output, we use the estimator's relative error (RE), which is defined by
%\[
%\mathrm{RE} = {\sqrt{\Var\left(\Ct{} \right)}}\left. \bigg/ \right. {\Em\left[ \Ct{}\right] \sqrt{R}}.
%\]
\[
\mathrm{RE} = \frac{\sqrt{\Var\left(\Ct{} \right)}} {\Em\left[ \Ct{}\right] \sqrt{R}}.
\]

As the variance and expectation of the estimator are not known explicitly, we report an estimate of the relative error by estimating both terms from the result of the $R$ runs.

\smallskip

\section{Complexity}\label{sec.analysis}
In this section, we present a theoretical complexity analysis of the
Independent SSA variant, discussed
in Remark~\ref{rem.split.strategy}.  Our focus is on establishing
time-complexity results, and thus our style of analysis is similar to
that used for approximate counting algorithms. For an extensive
overview, we refer to \cite[Chapter 10]{Mitzenmacher}.  We begin with
a definition of a randomized algorithm's efficiency.
\begin{definition}[\cite{Mitzenmacher}]\label{def.approx}
A~randomized algorithm gives an $(\varepsilon, \delta)$-approximation for the value $z$
if the output $\Ct{}$ of the algorithm satisfies
\[
	%\Pm \left( \left| \frac{\Ct{}}{z} -1\right| \leq \varepsilon \right) =
	 \Pm \left(z (1-\varepsilon) \leq \Ct{} \leq z(1+ \varepsilon) \right) \geq 1-\delta.
\]
\end{definition}
With the above definition in mind, we now aim  to specify the sufficient conditions for the SSA to provide an
 $(\varepsilon, \delta)$-approximation to $z$.  A key component in our  analysis is to construct a Markov chain $\left\{ X^{(m)}_{t}, \, m \geq 0 \right\}$ with  stationary pdf $f_t$ (defined in \eqref{eq.conditional1}),  for all $1 \leq t \leq n$, and to consider the speed of convergence of the distribution of $ X^{(m)}_{t}$ as $m$ increases. Let $\mu_t$ be the probability distribution corresponding to $f_t$, so
\[
	\mu_t(A) = \int_A{ f_t(u)\lambda(\mathrm{d}u) },
\]
for all Borel sets $A$,
where $\lambda$ is some base measure, such as the Lebesgue or counting measure.
To proceed, we have 
\[
	\kappa_t^\tau(A \mid \bx) = \Pm\left(  X^{(\tau)}_{t} \in A \mid X^{(0)}_{t} = \bx \right),
\]
for the $\tau$-step transition law of the Markov chain. Consider the { \em total variation distance} between  $\kappa_t^\tau(\cdot \mid \bx)$ and   $\mu_t$,  defined as:
\[
 \| \kappa_t^\tau(\cdot \mid \bx) - \mu_t \|_{\mathrm{TV}} = \sup_{A}{ \left|  \kappa_t^\tau(A \mid \bx) - \mu_t(A) \right|}.
\]
An essential ingredient of our analysis is the so-called mixing time (see \cite{roberts2004general} and \cite{opac-b1128575} for an extensive overview), which is defined  as 
\[
	\tau_{\mathrm{mix}}(\varepsilon,\bx) = \min{ \{ \tau \, : \, \| \kappa_t^\tau(\cdot \mid \bx) - \mu_t \|_{\mathrm{TV}} \leq \varepsilon \}}.
\]
Let $\fmc{t}  = \kappa_t^\tau(\cdot \mid \bx \,) $  be the SSA sampling distribution at steps $1 \leq t \leq n$  where, for simplicity, we suppress $\bx$ in the notation of $\fmc{t}$.

Finally, similar to $\mu_t$ and $\fmc{t} $, let $\nu_t$  be the
probability distribution corresponding to the pdf $\mypi{t}$ (defined
in \eqref{eq.conditional2}), and let  $\pmc{t}$ be the  SSA sampling
distribution, for all $1 \leq t \leq n$.   

Theorem \ref{thm.mcmc}
details the main efficiency result for the ISSA. We reiterate that from a
{\em complexity} point of view 
the independence setting does not impose a
theoretical limitation, as the run time complexity remains polynomial. An advantage
is that by using ISSA  we can engage powerful concentration
inequalities \citep{chernoff1952,hoefding}.  

\begin{theorem}[Complexity of the ISSA]\label{thm.mcmc}
  Let $ \Hfunc$ be a strictly positive real-valued function, $a_t =
  \min_{\bx \in \scZ_t}{\{ \Hfunc(\bx) \}}$, $b_t = \max_{\bx \in
    \scZ_t}{\{ \Hfunc(\bx) \}}$, and $\underline{r}_t =
  \min{\{r_t,1-r_t\}}$ for $1 \leq t \leq n$.
Let $\fmc{t}$ and $\pmc{t}$ be the ISSA sampling distributions at steps $1 \leq t \leq
n$, for ${\cal X}_t$ and ${\cal Z}_t, respectively$.  
  Then, the ISSA  gives an  $(\varepsilon, \delta)$-approximation to $z = \Em_f[\Hfunc(\bX)]$, provided that for all $1 \leq t \leq n$ the following holds.
  \begin{enumerate}
    
    \item \quad \label{thm.mcmc.condition1} $\displaystyle
 \| \fmc{t} - \mu_t \|_{\mathrm{TV}} \leq \frac{\varepsilon \,\underline{r}_t}{32n} \quad \text{and} \quad 		|{\cal X}_t| \geq \frac{3072  \,n^2 \ln (4n^2 /\delta)}{\varepsilon^2 \underline{r}_t^2}.$	
    
    \item \quad 
      $\| \pmc{t} - \nu_t \|_{\mathrm{TV}} \leq \frac{\varepsilon\, a_t}{16(b_t - a_t)} \quad \text{and} \quad |{\cal Z}_t| \geq \frac{128(b_t-a_t)^2\ln (4n/\delta)}{\varepsilon^2 a_t^2}$.	
  \end{enumerate}
  \end{theorem}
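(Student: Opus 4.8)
The plan is to control the relative error of each stratum estimator $\Ct{t}=\Ht{t}\,\Pt{t}$ separately and then to combine them. Since $\Hfunc$ is strictly positive, every $z_t=\varphi_t\,p_t$ is positive, so relative accuracy is preserved under the summation $z=\sum_{t=1}^n z_t$ and $\Ct{}=\sum_{t=1}^n \Ct{t}$. Concretely, I would show that on a high-probability event each $\Ht{t}$ lies within a factor $1\pm c\varepsilon$ of $\varphi_t$ and each $\Pt{t}$ within $1\pm c'\varepsilon$ of $p_t$; multiplying and choosing constants gives $\Ct{t}\in(1\pm\varepsilon)z_t$ for every $t$, and summing the positive terms yields $\Ct{}\in(1\pm\varepsilon)z$, which is exactly the $(\varepsilon,\delta)$-approximation of Definition~\ref{def.approx}. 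Independence of the ISSA samples is what lets me apply concentration inequalities cleanly at this stage.

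For the conditional-expectation factor $\Ht{t}=|\mathcal{Z}_t|^{-1}\sum_{\bZ\in\mathcal{Z}_t}\Hfunc(\bZ)$ I would split the error into bias and fluctuation. The samples in $\mathcal{Z}_t$ are drawn from $\pmc{t}$ rather than the target $\nu_t$, so $\Em[\Ht{t}]=\Em_{\pmc{t}}[\Hfunc]$; because $\Hfunc$ takes values in $[a_t,b_t]$ on $\scZ_t$, the standard coupling bound gives $|\Em_{\pmc{t}}[\Hfunc]-\varphi_t|\le(b_t-a_t)\,\|\pmc{t}-\nu_t\|_{\mathrm{TV}}$, and the second condition of the theorem makes this at most $\tfrac{\varepsilon}{16}a_t\le\tfrac{\varepsilon}{16}\varphi_t$. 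For the fluctuation, Hoeffding's inequality applied to the independent, $[a_t,b_t]$-valued summands with the stated lower bound on $|\mathcal{Z}_t|$ pushes $|\Ht{t}-\Em_{\pmc{t}}[\Hfunc]|$ below a further $O(\varepsilon)\varphi_t$ with failure probability at most $\delta/(4n)$; the $\ln(4n/\delta)$ factor is precisely what a union bound over the $n$ strata requires.

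The factor $\Pt{t}=(1-\Rt{t})\prod_{i=0}^{t-1}\Rt{i}$ is the crux, and I expect the product structure to be the main obstacle. Writing $\Pt{t}/p_t=\tfrac{1-\Rt{t}}{1-r_t}\prod_{i=1}^{t-1}\tfrac{\Rt{i}}{r_i}$, a product of up to $n$ ratios, I would force each ratio to within $1\pm O(\varepsilon/n)$ so that the whole product stays within $1\pm O(\varepsilon)$; this compounding is exactly what drives the $n^2$ in the sample-size bound and the $1/n$ in the total-variation bound. Each $\Rt{i}=|\mathcal{Y}_i|/|\mathcal{X}_i|$ is a sample proportion estimating $\mu_i(\scX_i)=r_i$, but the ISSA draws from $\fmc{i}$, so its mean is $\fmc{i}(\scX_i)$, which the first condition keeps within $\tfrac{\varepsilon\,\underline{r}_i}{32n}$ of $r_i$ via the total-variation bound; Hoeffding's inequality with the stated size of $|\mathcal{X}_i|$ then pins $\Rt{i}$ to $r_i$ to the required additive precision. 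The quantity $\underline{r}_i=\min\{r_i,1-r_i\}$ enters because both the factors $\Rt{i}/r_i$ and the single factor $(1-\Rt{t})/(1-r_t)$ must have good \emph{relative} accuracy, and the harder of the two is governed by $\underline{r}_i$.

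Finally I would assemble the pieces by a union bound. Allotting failure probability $\delta/(4n)$ to each of the $n$ conditional-expectation estimates, and $\delta/(4n^2)$ to each factor-level proportion event and unioning over the at most $n$ factors appearing in each of the $n$ product estimators, keeps the total failure probability below $\delta$ and accounts for the $\ln(4n/\delta)$ and $\ln(4n^2/\delta)$ terms respectively. On the complementary event all $\Ht{t}$ and $\Pt{t}$ simultaneously satisfy their relative-error bounds, so each $\Ct{t}\in(1\pm\varepsilon)z_t$ and hence $\Ct{}\in(1\pm\varepsilon)z$. The one delicate point beyond bookkeeping is verifying that the per-ratio tolerances multiply out to a clean $1\pm\varepsilon$ bound on the product while the bias and fluctuation contributions to each factor stay within their allotted budget; this is where the explicit constants ($32$, $3072$, $16$, $128$) are pinned down, and tracking them carefully is the routine but unavoidable part of the argument.
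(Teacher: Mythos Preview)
Your proposal is correct and follows essentially the same route as the paper: decompose into per-stratum bounds (paper's Lemma~1), split each $\Ct{t}$ into $\Ht{t}$ and $\Pt{t}$ controlled to $(\varepsilon/4,\delta/(2n))$ accuracy (Lemma~2), handle $\Ht{t}$ by a TV bias bound plus Hoeffding (Lemmas~\ref{lem.ratio.helper} and~3), and handle the product $\Pt{t}$ by forcing each ratio $\Rt{i}/r_i$ and $(1-\Rt{t})/(1-r_t)$ to within $1\pm O(\varepsilon/n)$ with failure probability $O(\delta/n^2)$ (Lemma~4), using the elementary inequality $(1\pm\varepsilon/(2n))^n\in[1-\varepsilon,1+\varepsilon]$ for the product. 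The only minor deviation is that the paper invokes a Chernoff bound (Remark~\ref{rem.rt.upper.bound.bin}) rather than Hoeffding for the Bernoulli proportions $\Rt{t}$; either choice suffices for the stated sample-size constant.
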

  
  \begin{proof} See Appendix \ref{appendix.proofs}.
  \end{proof}
  
%%  \begin{remark}[Alternative splitting
%%      mechanism]\label{rem.split.independence2}\normalfont

%% **** simplify. remover some references ***
%%   Our choice of applying the independent setting does not impose a
%%   serious limitation from the theoretical time-complexity point of
%%   view, and is more convenient for an analysis. Specifically, under
%%   the independent setting, we are able to apply powerful concentration
%%   inequalities \citep{chernoff1952,hoefding}.  Alternatively, one
%%   could abolish the independence assumption, and use Hoeffding-type
%%   inequalities for dependent random variables, such as the ones
%%   proposed in \citep{GlynnOrmoneit,paulin2015}.
%%   \end{remark}

  %The result of Theorem \ref{thm.mcmc} applies  generally. 
  In some cases, the distributions of the states in $\scX_t$ and $\scZ_t$ generated by  Markov chain defined by the kernel $\kappa_t^{\tau}$, approach the target distributions $\mu_t$ and $\nu_t$ very fast. This occurs for example when there exists a polynomial in $n$ (denoted by $\mathcal{P}(n)$), such that the mixing time \citep{opac-b1128575} is bounded by $\mathcal{O}(\mathcal{P}(n))$,
  $(b_t-a_t)^2/a^2 = \mathcal{O}(\mathcal{P}(n))$, and  $\underline{r}_t = \mathcal{O}(1/\mathcal{P}(n)) $  for all $1 \leq t \leq n$.
  In this case, the ISSA becomes a {\it fully polynomial randomized approximation scheme} (FPRAS) \citep{Mitzenmacher}. In particular, the ISSA results in a desired $(\varepsilon, \delta)$-approximation to $z = \Em_f[\Hfunc(\bX)]$ with running time bounded by a polynomial in $n$, $\varepsilon^{-1}$, and $\ln(\delta^{-1})$. Finally, it is important to note that an FPRAS algorithm for such problems is essentially the best result one can hope to achieve \citep{Jerrum96}.
  
  \smallskip
  
  We next illustrate the use of 
  Theorem \ref{thm.mcmc} with an example of a difficult problem for which the ISSA provides an FPRAS.

  \section{FPRAS for the weighted component model}\label{sec.exact.example}
  Consider a system of $k$ components. Each  component $i$  generates a specific amount of benefit, which is given by a positive real number $w_i$, $i=1,\ldots,k$. In addition, each  component can be operational or not. 
  
  Let $\bw=(w_1,\ldots,w_k)^\top$ be the column vector of component weights (benefits), and $\bx = (x_1,\ldots, x_k)^\top$ be a binary column vector, where  $x_i$ indicates the $i$th component's operational status for $1 \leq i \leq k$. That is, if the component $i$ is operational $x_i = 1$, and $x_i = 0$ if it is not. Under this setting, we define the system  performance as
  \[
    \perf(\bw,\bx)= \sum_{i=1}^k w_i \, x_i = \bw^\top \bx.
  \]
  We further assume that all elements are independent of each other, and that each element is operational with probability $1/2$ at any given time. For the above system definition, we might be interested in the following questions.
  
  \begin{enumerate}
      
  \item\label{exect.prob2} \emph{Conditional expectation estimation.}  Given a minimal threshold performance  $\treshold \leq \sum_{i=1}^k w_i$, what is the expected system performance? That is to say, we are interested in the calculation of
  \beq\label{eq.sim.prob1}
  \Em \left[\perf(\bw,\bX) \mid \perf(\bw,\bX) \leq \treshold \right],
  \eeq
  where $\bX$ is a $k$-dimensional binary vector generated uniformly at random from the $\{0,1 \}^k$ set.
  This setting  appears (in a more general form), in a  portfolio credit risk analysis \citep{Glasserman2}, and will be discussed in Section \ref{sec.numerics}.
  
  \item\label{exect.prob1} \emph{Tail probability estimation \citep{asmussen2007stochastic}.} Given the minimal threshold performance  $\treshold$, what is the probability that the overall system performance is smaller than $\treshold$? In other words, we are interested in calculating
    \beq\label{eq.sim.prob2}
      \Pm(\perf(\bX) \leq \treshold) =  \Em\left[\Ind{\{ \perf(\bX) \leq \treshold\}} \right].
    \eeq
    %This setting is known as the tail probability estimation problem \citep{asmussen2007stochastic}.
  
  \end{enumerate}
  
  The above problems are both difficult, since a  uniform  generation of $\bX \in \{0,1 \}^k$, such that
  $\bw^\top\bX \leq \treshold$,  corresponds to the 
  %counting version of a 
  {\em knapsack} problem, which belongs to \#P complexity class ~\citep{Valiant79,SinclairCube}. In this section, we show how one can construct an FPRAS for both problems under the mild condition that the difference between the minimal and the maximal weight in the $\bw$ vector is not large. This section's main  result is summarized next.
  \begin{proposition}	\label{prop.ssa.fpras}
  Given a weighted component model with $k$ weights,  $\bw = (w_1,\ldots,w_k)$ and a threshold $\treshold$, let $\wmin = \min\{ \bw \}$, $\wmax = \max\{ \bw \}$. Then,  provided that $\wmax = \mathcal{O}\left(\mathcal{P}(k)\right)\wmin$, there exists an FPRAS for the estimation of both \eqref{eq.sim.prob1} and \eqref{eq.sim.prob2}.
  \end{proposition}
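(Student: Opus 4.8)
The plan is to instantiate the ISSA for this model and then verify the three sufficient conditions for an FPRAS identified in the discussion following Theorem~\ref{thm.mcmc}. Here the state space is $\scX=\{0,1\}^k$, the reference density $f$ is uniform on $\scX$, and the performance function is $\perf(\bw,\bx)=\bw^\top\bx$. I would take the sets $\{\scX_t\}$ to be \emph{sub}-level sets of $\perf$, namely $\scX_t=\{\bx:\perf(\bw,\bx)\le\gamma_t\}$ for a decreasing sequence $\gamma_0=\sum_i w_i>\gamma_1>\cdots>\gamma_n$ with $\gamma_n<0$ (so $\scX_n=\emptyset$), arranging that the target threshold $\treshold$ is itself one of the $\gamma_t$. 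Then each $f_t$ is uniform on $\scX_{t-1}$ and the natural kernel $\kappa_t$ is a lazy random walk on the hypercube restricted to $\scX_{t-1}$. Both target quantities reduce to SSA estimators over this ladder: the tail probability \eqref{eq.sim.prob2} is obtained with $\Hfunc=\Ind\{\perf\le\treshold\}$, which is constant (hence effectively strictly positive) on each stratum $\scZ_t$ that contributes to the sum, the remaining strata having $z_t=0$; and the conditional expectation \eqref{eq.sim.prob1} is written as the ratio $\Em[\perf\,\Ind\{\perf\le\treshold\}]/\Pm(\perf\le\treshold)$ of two such estimators, so that an $(\varepsilon,\delta)$-approximation of numerator and denominator yields an $(\mathcal O(\varepsilon),\mathcal O(\delta))$-approximation of the ratio by standard error propagation.

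\textbf{Conditions to check.} By the remark after Theorem~\ref{thm.mcmc} it suffices to exhibit, for a number of levels $n=\mathcal O(\mathcal P(k))$, a kernel with polynomial mixing time, strata on which $(b_t-a_t)^2/a_t^2=\mathcal O(\mathcal P(k))$, and entrance probabilities bounded away from $0$ and $1$, i.e. $\underline r_t^{-1}=\mathcal O(\mathcal P(k))$. For the mixing time I would invoke the known result that the lazy random walk on the knapsack polytope $\scX_{t-1}$ mixes in time polynomial in $k$ (Morris--Sinclair), which supplies sampling distributions $\fmc t$ and $\pmc t$ meeting the total-variation requirements of Theorem~\ref{thm.mcmc} after $\tau=\mathrm{poly}(k)$ steps. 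The bound on $(b_t-a_t)^2/a_t^2$ follows once the level gaps are controlled: on $\scZ_t$ one has $a_t=\min_{\scZ_t}\perf\ge\gamma_t$ and $b_t-a_t\le\gamma_{t-1}-\gamma_t$, so choosing gaps of order $\wmax$ and using $\perf(\bw,\bx)\ge\wmin$ for every $\bx\ne\mathbf 0$ gives $(b_t-a_t)/a_t\le\wmax/\wmin=\mathcal O(\mathcal P(k))$; the single point $\bx=\mathbf 0$, where $\perf=0$, contributes nothing to either integrand and is handled separately.

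\textbf{The crux.} The main obstacle is the two-sided control of $\underline r_t=\min\{r_t,1-r_t\}$: choosing the ladder so that each stratum carries neither too small nor too large a fraction of its enclosing level set, while keeping $n$ polynomial. The key combinatorial input is an injection obtained by toggling a single component. Removing the component of weight $w_1$ from a solution in $\bwsubset$ (those $\bx\in\bset$ with $x_1=1$) lands in $\bminussubset$, giving $|\bwsubset|=|\bminussubset|\le|\bwset|$; aggregating this over the components yields a level-counting bound of the form $|\bset|\le(k+1)\,|\scX_{b-\wmax}|$. Thus lowering the threshold by $\wmax$ retains at least a $1/(k+1)$ fraction, so spacing levels by $\Theta(\wmax)$ forces $r_t\ge 1/(k+1)$; read as an upper estimate, the same bound prevents any single value of $\perf$ from carrying more than a $(1-1/(k+1))$ fraction of a level set, which is precisely what lets the thresholds be placed (adaptively, if needed) so that $1-r_t$ is also $\Omega(1/k)$. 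The hypothesis $\wmax=\mathcal O(\mathcal P(k))\,\wmin$ enters twice: it bounds the number of levels by $n\le\sum_i w_i/\Theta(\wmax)=\mathcal O(k)$, and it rules out the degenerate ``large-gap'' configurations (such as one dominant weight) in which $\perf$ is so concentrated that no balanced split exists.

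\textbf{Conclusion.} With $n=\mathcal O(\mathcal P(k))$, $\tau_{\mathrm{mix}}=\mathcal O(\mathcal P(k))$, $(b_t-a_t)^2/a_t^2=\mathcal O(\mathcal P(k))$, and $\underline r_t^{-1}=\mathcal O(\mathcal P(k))$ all in hand, the criterion stated after Theorem~\ref{thm.mcmc} makes the ISSA an FPRAS for each of \eqref{eq.sim.prob1} and \eqref{eq.sim.prob2}, and the ratio argument of the first paragraph transfers this to the conditional expectation. The step I expect to be most delicate is establishing the two-sided entrance-probability bound uniformly over the ladder, since it couples the injection-based counting lemma with a careful placement of the levels $\gamma_t$ under the weight-ratio hypothesis.
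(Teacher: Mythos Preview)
Your level spacing is the problem. The injection argument you sketch---toggling the smallest-weight component---establishes $|\scX_{b-\wmin}|\geq |\scX_b|/(k+1)$, i.e., lowering the threshold by $\wmin$ retains a $1/(k+1)$ fraction; it does \emph{not} give $|\scX_{b-\wmax}|\geq |\scX_b|/(k+1)$. For a concrete failure, take $w_1=\cdots=w_{k-1}=1$, $w_k=k$ (so $\wmax/\wmin=k$, polynomial) and $b=k-1$: then $|\scX_b|=2^{k-1}$ while $|\scX_{b-\wmax}|=|\scX_{-1}|=0$, and along a $\Theta(\wmax)$-spaced ladder one meets transitions such as $|\scX_1|/|\scX_{k-1}|=k/2^{k-1}$, so $r_t$ can be exponentially small and the sample-size requirement in Theorem~\ref{thm.mcmc} explodes. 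The paper therefore spaces by $\wmin$, setting $\gamma_t=\treshold+(n-t)\wmin$; then Lemma~\ref{lem.rt.upper.bound} (exactly the toggling injection you outline, applied to $w_1=\wmin$) gives $r_t\geq 1/(k+1)$ at every step. The cost is $n=\lfloor(\sum_i w_i-\treshold)/\wmin\rfloor\leq k\,\wmax/\wmin$, and \emph{this} is where the hypothesis $\wmax=\mathcal O(\mathcal P(k))\,\wmin$ actually enters for \eqref{eq.sim.prob2}: it keeps the number of levels polynomial. (Note that with your $\wmax$ spacing, $n\leq\sum_i w_i/\wmax\leq k$ holds unconditionally, so the hypothesis would be doing no work in your bound on $n$.)

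Your treatment of \eqref{eq.sim.prob1} also diverges from the paper and is more elaborate than necessary. The paper builds no ladder for the conditional expectation at all: since the Morris--Sinclair chain already samples nearly uniformly from $\scX_\treshold$ in time polynomial in $k$, a single application of Lemma~\ref{lem.ratio.helper} with $a=\wmin$, $b=\wmax$ (hence $(b-a)/a\leq\wmax/\wmin$) yields an $(\varepsilon,\delta)$-approximation from $\mathcal O\bigl((\wmax/\wmin)^2\,\varepsilon^{-2}\ln(1/\delta)\bigr)$ samples, polynomial under the hypothesis. Your ratio-of-two-SSA-estimators route could in principle be repaired once the spacing is fixed to $\wmin$, but it drags in the full ladder machinery (and the strict-positivity caveat at $\bx=\mathbf 0$) for a quantity that needs none of it.
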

  %\smallskip
  Prior to stating the proof of Proposition \ref{prop.ssa.fpras}, define
  \beq\label{eq.x.exact}
    \scX_b = \left\{\bx \in \{ 0,1\}^k \; : \; \sum_{i=1}^k w_i \, x_i \leq b \right\} \quad \text{for  }b \in \R,
  \eeq
  and let $\mu_b$ be the uniform distribution on the $\scX_b$ set.  \cite{SinclairCube} introduce an MCMC algorithm that is capable of sampling from the $\scX_b$ set almost uniformly at random. In particular, this algorithm can sample  $\bX \sim \fmc{b}$, such that $\left\|\fmc{b} - \mu_b \right\|_{\mathrm{TV}} \leq \varepsilon$. Moreover, the authors show that their Markov chain mixes rapidly, and in particular, that its mixing  time is polynomial in $k$ and is given by $\tau_{\mathrm{mix}}(\varepsilon)= \mathcal{O}\left(k^{9/2+\varepsilon}\right)$. Consequentially, the sampling from  $\fmc{b}$ can be performed  in  $\mathcal{O}(\mathcal{P}(k))$ time for any $\varepsilon > 0$. 
 
\smallskip 
  
The proof of Proposition \ref{prop.ssa.fpras} depends on the following technical lemma.

 \begin{lemma}\label{lem.ratio.helper}
   Let $X \sim \widehat{\mydist}$ be a strictly positive univariate random variable such that $a \leq X \leq b$,  and let $X_1,\ldots, X_m$ be its independent realizations. Then,   provided that
    \[
     \| \widehat{\mydist} - \mydist \|_{\mathrm{TV}} \leq \frac{\varepsilon \, a}{4(b-a)}, \quad \text{and} \quad m \geq \frac{(b-a^2)\ln(2/\delta)}{2 (\varepsilon/4)^2 a^2 },
   \]
   it holds that:
  \[
    \Pm\left( (1-\varepsilon) \Em_\mydist[X] \leq \frac{1}{m}\sum_{i=1}^m{X_i}  \leq (1+\varepsilon) \Em_\mydist[X] \right) \geq 1-\delta.
  \]
   \end{lemma}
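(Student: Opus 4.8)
The plan is to decompose the error between the sample mean $\bar X_m = \frac1m\sum_{i=1}^m X_i$ and the target $\Em_\mydist[X]$ into a \emph{bias} term, arising from the discrepancy between the sampling law $\widehat{\mydist}$ and the target $\mydist$, and a \emph{statistical fluctuation} term, arising from the finiteness of the sample. The triangle inequality gives
$$\left| \bar X_m - \Em_\mydist[X] \right| \leq \left| \bar X_m - \Em_{\widehat{\mydist}}[X] \right| + \left| \Em_{\widehat{\mydist}}[X] - \Em_\mydist[X] \right|.$$
I would budget $\varepsilon a / 4$ to each of the two terms; since $X \geq a$ forces $\Em_\mydist[X] \geq a$, a total error of $\varepsilon a / 2$ is then at most $(\varepsilon/2)\,\Em_\mydist[X] \leq \varepsilon\,\Em_\mydist[X]$, which is exactly the two-sided multiplicative statement claimed.

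For the bias term, the key fact is that the expectations under $\widehat{\mydist}$ and $\mydist$ of a variable valued in an interval of length $b-a$ differ by at most $(b-a)$ times their total variation distance. Concretely, writing $X = \tfrac{a+b}{2} + \tfrac{b-a}{2}\,g$ with $g$ valued in $[-1,1]$, and invoking the dual characterisation $\sup_{\|g\|_\infty \leq 1}\bigl(\Em_{\widehat{\mydist}}[g] - \Em_\mydist[g]\bigr) = 2\,\|\widehat{\mydist} - \mydist\|_{\mathrm{TV}}$, one obtains $|\Em_{\widehat{\mydist}}[X] - \Em_\mydist[X]| \leq (b-a)\,\|\widehat{\mydist} - \mydist\|_{\mathrm{TV}}$. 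Substituting the first hypothesis $\|\widehat{\mydist} - \mydist\|_{\mathrm{TV}} \leq \varepsilon a / (4(b-a))$ collapses this to $\varepsilon a / 4$. This step is entirely deterministic and consumes the total-variation hypothesis.

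For the fluctuation term I would invoke Hoeffding's inequality for independent bounded variables. Since $X_1,\ldots,X_m$ are independent copies of $X \in [a,b]$ under $\widehat{\mydist}$,
$$\Pm\!\left( \left| \bar X_m - \Em_{\widehat{\mydist}}[X] \right| \geq \frac{\varepsilon a}{4} \right) \leq 2\exp\!\left( -\frac{2\,m\,(\varepsilon a/4)^2}{(b-a)^2} \right).$$
Forcing the right-hand side to be at most $\delta$ and solving for $m$ recovers precisely the sample-size hypothesis $m \geq \frac{(b-a)^2\ln(2/\delta)}{2(\varepsilon/4)^2 a^2}$ (up to the evident typographical slip of $b-a^2$ for $(b-a)^2$), so the fluctuation term is at most $\varepsilon a / 4$ with probability at least $1-\delta$. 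Intersecting this event with the deterministic bias bound through the displayed triangle inequality yields the claim.

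The one point requiring care, rather than a deep obstacle, is the constant in the bias bound: the total variation distance must be paired with the oscillation $b-a$ (not $b$, nor $(b-a)/2$), which hinges on the convention $\|\mu - \nu\|_{\mathrm{TV}} = \sup_A|\mu(A) - \nu(A)|$ used earlier in the paper. Pinning this constant down correctly is exactly what makes the two hypotheses align with the symmetric $\varepsilon a / 4$ error split, so I would settle it first before assembling the two halves.
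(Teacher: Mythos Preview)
Your proof is correct and follows essentially the same approach as the paper: bound the bias $|\Em_{\widehat{\mydist}}[X]-\Em_\mydist[X]|$ by $(b-a)\|\widehat{\mydist}-\mydist\|_{\mathrm{TV}}\leq \varepsilon a/4$, bound the fluctuation via Hoeffding, and convert to a relative statement using $\Em_\mydist[X]\geq a$. The only cosmetic difference is that the paper combines the two pieces multiplicatively (writing each as a ratio in $[1-\varepsilon/4,1+\varepsilon/4]$ and using $(1\pm\varepsilon/4)^2\subseteq[1-\varepsilon,1+\varepsilon]$), whereas you add the absolute errors directly---your route is if anything slightly cleaner and yields the marginally sharper conclusion $|\bar X_m-\Em_\mydist[X]|\leq(\varepsilon/2)\Em_\mydist[X]$.
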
  
 \begin{proof} See Appendix \ref{appendix.proofs}.
  \end{proof} 
  
   We next proceed with the proof of Proposition \ref{prop.ssa.fpras} which is divided into two parts. The first for the conditional expectation estimation and the second for the tail probability evaluation.
  
  \begin{proof}[Proposition \ref{prop.ssa.fpras}: FPRAS for \eqref{eq.sim.prob1}]
  
  With the powerful result of \cite{SinclairCube} in hand, one can achieve a straightforward development of an FPRAS for the conditional expectation estimation problem. The proof follows  immediately from  Lemma \ref{lem.ratio.helper}. In particular, all we need to do in order to achieve an $(\varepsilon,\delta)$ approximation to \eqref{eq.sim.prob1} is to generate
  \[
    m = \frac{(\wmax - \wmin)^2\ln(2/\delta) }{2 (\varepsilon/4)^2 \wmin^2}
  \]
  samples from $\fmc{\treshold}$, such that
  \[
    \left\| \fmc{\treshold} - \mu_\treshold \right\|_{\mathrm{TV}} \leq \frac{\varepsilon \, \wmin}{4(\wmax - \wmin)}.
  \]
   Recall that the mixing time is polynomial in $k$, and note that the number of samples $m$ is also polynomial in $k$, thus the proof is complete, since
  \[
     m = \frac{(\wmax - \wmin)^2\ln(2/\delta) } { (\varepsilon/4)^2\,\wmin^2} \underbrace{=}_{\wmax = \mathcal{O}(\mathcal{P}(k))\wmin}
     %
     %\frac{\mathcal{O}(\mathcal{P}(k))\wmin}{\wmin}\frac{\ln(2/\delta)}{\varepsilon^2 }  =
  %	
      \mathcal{O}(\mathcal{P}(k))\frac{\ln(2/\delta)}{\varepsilon^2}.  \eqno \qed 
  \]
  \end{proof}
  %\begin{remark} \normalfont
  %The reason for writing $\wmax - \wmin+1$ instead of $\wmax - \wmin$ is solely technical. Namely,  we would like to handle a case, for which all the weights are equal. In this scenario \eqref{eq.tot.var.bound} is not properly defined, because of a division by $\wmax - \wmin=0$.  Nevertheless, by writing $\wmax - \wmin+1$, the above problem is resolved without affecting the overall algorithmic complexity.
  %\end{remark} 
  
  \begin{proof}[Proposition \ref{prop.ssa.fpras}: FPRAS for \eqref{eq.sim.prob2}]
  In order to put this problem into the setting of
  Theorem~\ref{thm.mcmc} and achieve an
  FPRAS, a careful definition of the corresponding level sets is
  essential. In particular, the number of levels should be polynomial
  in $k$, and the level entrance probabilities $\{ r_t\}_{ 1 \leq t
    \leq n}$, should not be too small. Fix
  \[
    n =  \left \lfloor \frac{\left(\sum_{i=1}^k{w_i}\right) - \treshold}{\wmin} \right \rfloor,
  \]
  to be the number of levels, and set $\gamma_{t} = \treshold + (n-t)\,\wmin$ for $t=0,\ldots,n$. 
  %Clearly, it does not generally holds that $\treshold + n\,\wmin =  \sum_{i=1}^k{w_i}$. However, the latter does not impose a serious challenge, since from the law of total probability,  we have  that
  For general $\treshold$ it holds that
  {\footnotesize 
  \begin{align*}
    &\Em\left[\Ind{\{ \perf(\bX) \leq \treshold \}} \right] = \Em\left[\Ind{\{ \perf(\bX) \leq \treshold + n\,\wmin \}} \mid  \perf(\bX) \leq \treshold + n\,\wmin \right] \Pm \left( \perf(\bX) \leq \treshold + n\,\wmin \right) \\
    &+\Em\left[\Ind{\{ \perf(\bX) \leq \treshold \}} \mid   \treshold + n\,\wmin < \perf(\bX) \leq \sum_{i=1}^k{w_i} \right] \Pm \left( \treshold + n\,\wmin < \perf(\bX) \leq \sum_{i=1}^k{w_i} \right) \\
    &=  \underbrace{\Em\left[\Ind{\{ \perf(\bX) \leq \treshold \}} \mid  \perf(\bX) \leq \treshold + n\,\wmin \right]}_{(*)} \frac{2^{k}-1}{2^k} + \left(\sum_{i=1}^k{w_i}\right)\frac{1}{2^k},
  \end{align*}}where the last equality follows from the fact that there is only one vector $\bx=(1,1,\ldots,1)$ for which $\treshold + n\,\wmin < \perf(\bx) \leq \sum_{i=1}^k{w_i}$. That is, it is sufficient to develop an efficient approximation to $(*)$ only, since the rest are constants.
  
   We continue by defining the sets $ {\scX} = {\scX}_{\gamma_0}  \supseteq \cdots  \supseteq {\scX}_{\gamma_n}$ via \eqref{eq.x.exact}, and by noting that
   for this particular problem, our aim is to find $\Pm(\bX \in {\scX}_{\gamma_n})$, so the corresponding estimator simplifies to (see Section \ref{sec.ssa.setup}
  (\ref{rt.explain})),
  \[
    \Ct{} = \prod_{t=0}^n{\Rt{t}}.
  \]
  In order to show that the algorithm provides an FPRAS, we will need to justify only condition (\ref{thm.mcmc.condition1}) of Theorem \ref{thm.mcmc}, which is sufficient in our case because we are dealing with an indicator integrand.  Recall that the formal requirement is
    \[
      \| \fmc{t} - \mu_t \|_{\mathrm{TV}} \leq {\varepsilon \,\underline{r}_t}/{32n}, \quad \text{and} \quad 		|{\cal X}_t| \geq {3072  \,n^2 \ln (4n^2 /\delta)}/{\varepsilon^2 \underline{r}_t^2},
    \]	
  where $\mu_{t}$ is the uniform distribution on  ${\scX}_{\gamma_t}$  for $t=0,\ldots,n$, and each sample in ${\cal X}_t$ is distributed according to $\fmc{t}$. Finally, the FPRAS result is established by noting that the following holds.
  \begin{enumerate}
    \item From Lemma \ref{lem.rt.upper.bound} in the Appendix, we have  that $\underline{r}_t  \geq \frac{1}{k+1}$ for $1 \leq t \leq n$.
    
    \item The sampling from $\fmc{t}$ can be performed in polynomial (in $k$) time  \citep{SinclairCube}.
    
    \item The number of levels $n$ (and thus the required sample size $\{ |{\cal X}_t|\}_{1 \leq t \leq n}$) is polynomial in $k$,  since
  \[
    \left \lfloor \frac{\left(\sum_{i=1}^k{w_i}\right) - \tau}{\wmin} \right \rfloor \leq \frac{k\, \wmax}{\wmin} \underbrace{=}_{\wmax = \mathcal{O}(\mathcal{P}(k))\wmin} \mathcal{O}(\mathcal{P}(k)).  \eqno \qed
  \]
  \end{enumerate}
  \end{proof}
  
  Unfortunately, for many problems, an analytical result such as the one obtained in  this section  is not always possible to achieve. The aim of the following numerical section is to demonstrate that the SSA is capable of handling hard problems in the absence of theoretical performance.

  \section{Numerical experiments}\label{sec.numerics}
  %In this section, we consider the following problems.
  %
  %\begin{enumerate}[label=\arabic*.]
  %	\item The portfolio risk assessment problem, which is a generalization of the weighted component model considered in Section \ref{sec.exact.example}, in that an asset (component) defaults (failure) is no longer an independent event and the default probability is not fixed to be $1/2$.
  %
  %	\item  A well-known benchmark problem for Bayesian model comparison. We show that, in the sense of accuracy,  the SSA delivers a good performance which is comparable with the state of the art.
  %
  %	\item We demonstrate how the SSA can be applied to (approximate) model inference in Bayesian settings.
  %	
  %\item To show how the SSA  can be applied in non-statistical settings, we consider different questions related to an important model in polymer physics --- the  self-avoiding walk.
  %\end{enumerate}
  %
  %\medskip

  \subsection{Portfolio credit risk}
  We consider a portfolio credit risk setting \citep{Glasserman2}. Given a portfolio of $k$ assets, the portfolio loss $L$ is the random variable
  \beq\label{eq.loss.function}
    L = \sum_{i=1}^k {l_i \; X_i},
  \eeq
  where $l_i$ is the risk of asset $i \in \{1,\ldots,k\}$, and $X_i$ is an indicator  random variable that models the default of asset $i$. Under this setting (and similar to Section \ref{sec.exact.example}), one is generally interested in the following.

  \begin{enumerate}
    \item \emph{Conditional Value at Risk (CVaR).} Given a threshold (value at risk) $v$, calculate the conditional value at risk
  $c = \Em[L \mid L \geq v]$.
  
    \item \emph{Tail probability estimation.} Given the value at risk, calculate the tail probability
    $\Pm(L \geq v) = \Em\left[\Ind{ \{ L \geq v \}} \right]$.
  \end{enumerate}
  
  The SSA can be applied to both problems as follows. For tail probability estimation, we simply set $\Hfunc{}(\bx) = \Ind{\{\sum_{i=1}^k {l_i \; x_i} \geq  v\}}$.  For conditional expectation, we set $\Hfunc{}(\bx) = \sum_{i=1}^k {l_i \; x_i}$. 
  
    Note that the tail probability estimation   (for which the integrand is the indicator function), is a special case of a general integration. Recall that the GS algorithm of \cite{Botev2012} works on indicator integrands, and thus GS is a special case of the SSA. Consequently, in this  section we will investigate the more interesting (and more general) scenario of estimating an expectation conditional on a rare event.
  %\end{remark}
  %
  
  As our working example, we consider a credit risk in a Normal Copula model and, in particular,   a $21$ factor model from \cite{Glasserman2} with $1,000$ obligors. Thus, the integrals of interest are $1021$ dimensional.
  
  The SSA setting is similar to the weighted component model from Section \ref{sec.exact.example}. We  define a $k$-dimensional binary vector $\bx = (x_1,\ldots,x_k)$, for which $x_i$ stands for the $i$th asset default ($x_i=1$ for default, and $0$ otherwise). We take the performance function $S(\bx)$ to be the loss function \eqref{eq.loss.function}. Then, the level sets are defined naturally by $\scX_t = \{ \bx \; : \; S(\bx)\geq \gamma_t\}$,  (see also Section~\ref{sec.ssa.setup}). In our experiment, we set $\gamma_0 = 0$ and $\gamma_n = 1+\sum_{i=1}^k{l_i}$. In order to determine the remaining levels $\gamma_1,\ldots,\gamma_{n-1}$, we execute a pilot run of Algorithm \ref{alg.SSA} with $N=1,000$ and $\rho=0.1$. As an MCMC sampler, we use a Hit-and-Run algorithm \citep[Chapter 10, Algorithm 10.10]{Kroese2011}, taking a new sample after $50$ transitions.
 
   It is important to note that despite the existence of several algorithms for estimating $c$, the SSA has an interesting feature, that (to the best of our knowledge) is not present in other methods. Namely, one is able to obtain an estimator for several CVaRs  via a {\em single} SSA run.
    To see this, consider the estimation of  $c_{1} , \ldots , c_{s}$ for $s\geq 1$. Suppose that $v_{1} \leq \cdots \leq v_{s}$ and note that it will be sufficient to add these values to the $\{ \gamma_t \}$ (as additional levels), and retain $s$ copies of $\Pt{t}$ and $\Ct{t}$. In particular, during the SSA execution, we will need to
   closely follow the $\gamma$ levels, and as soon as we encounter a certain $v_{j}$ for $1 \leq j \leq s$, we will start to update the corresponding values of  $\Pt{t}^{(j)}$ and $\Ct{t}^{(j)}$, in order to allow the corresponding estimation of $c_j = \Em[L \mid L \geq v_j]$.  Despite that such a procedure introduces a dependence between the obtained estimators, they  still remain unbiased.

  To test the above setting, we perform the experiment with a view to estimate  $\{c_j\}_{1 \leq j \leq 13}$ using the following values at risk:
  \begin{align}\label{eq.alpha.levels}
      \{   10000, \;  14000, \;& 18000, \; 22000, \; 24000, \; 28000, \; 30000,   \\
  &  34000, \; 38000, \; 40000, \; 44000, \; 48000, \; 50000 \}. \nonumber
  \end{align}

  The execution of the SSA pilot run with the addition of the desired VaRs (levels) from \eqref{eq.alpha.levels} (marked in bold),  yields  the following level values of $(\gamma_0,\ldots,\gamma_{21})$:
  \begin{align*}
   (& 0, \; 788.3, \; 9616.7, \; \mathbf{10000}, \;	\mathbf{14000}, \;	\mathbf{18000}, \;	\mathbf{22000}, \;	\mathbf{24000}, \;	\mathbf{28000}, \;	\\
   & \mathbf{30000}, \;  \mathbf{34000}, \;	\mathbf{38000}, \;	\mathbf{40000}, \;	\mathbf{44000} , \;	47557.6 , \;	\mathbf{48000} , \;	49347.8 , \;	\\
   & \mathbf{50000}, \;	50320.6 , \,
   50477.4 , \;	50500, \; \infty).
  \end{align*}

  Table \ref{tab.credit.single.ssa}  summarizes the results obtained by executing $1$ pilot and $5$ regular independent runs of the  SSA. For each run, we use the parameter set that was specified for the pilot run ($N=1,000$ and burn-in of $50$). The overall execution time (for all these $R=1+5=6$ independent runs) is 454 seconds. The SSA is very accurate. In particular, we obtain an RE of less than $1\%$ for each $\widehat{c}$ while employing a very modest effort.
  
  %\footnotesize

  %\begin{longtabu} to \textwidth {ccc|ccc}
  \begin{table}[h]  
  \center
  \normalsize
  \begin{tabular}{ccc|ccc}

    $v$ & $\widehat{c}$ & RE &  $v$ & $\widehat{c}$ & RE \\ \hline
  
  10000	&	\num{	1.68E+04	}	&	0.67	\%	&	34000	&	\num{	3.80E+04	}	&	0.05	\%	\\
  14000	&	\num{	2.09E+04	}	&	0.51	\%	&	38000	&	\num{	4.11E+04	}	&	0.05	\%	\\
  18000	&	\num{	2.46E+04	}	&	0.21	\%	&	40000	&	\num{	4.26E+04	}	&	0.08	\%	\\
  22000	&	\num{	2.82E+04	}	&	0.19	\%	&	44000	&	\num{	4.56E+04	}	&	0.08	\%	\\
  24000	&	\num{	2.99E+04	}	&	0.23	\%	&	48000	&	\num{	4.86E+04	}	&	0.02	\%	\\
  28000	&	\num{	3.32E+04	}	&	0.21	\%	&	50000	&	\num{	5.01E+04	}	&	0.02	\%	\\
  30000	&	\num{	3.48E+04	}	&	0.14	\%	&		&				&			\\
    \end{tabular}
  \caption{The SSA results for the Normal copula credit risk model with $21$ factors   and $1,000$ obligors.}\label{tab.credit.single.ssa}
  %\end{longtabu}
  \end{table}
  
  \normalsize
  
   The obtained result is especially appealing, since the corresponding estimation problem falls into rare-event setting \citep{Glasserman1}.  That is, a CMC estimator will not be applicable in this case.

\enlargethispage{1cm}
%\paragraph{Paragraph headings} Use paragraph headings as needed.

% % For one-column wide figures use
% \begin{figure}
% % Use the relevant command to insert your figure file.
% % For example, with the graphicx package use
%   \includegraphics{example.eps}
% % figure caption is below the figure
% \caption{Please write your figure caption here}
% \label{fig:1}       % Give a unique label
% \end{figure}
% %
% % For two-column wide figures use
% \begin{figure*}
% % Use the relevant command to insert your figure file.
% % For example, with the graphicx package use
%   \includegraphics[width=0.75\textwidth]{example.eps}
% % figure caption is below the figure
% \caption{Please write your figure caption here}
% \label{fig:2}       % Give a unique label
% \end{figure*}
% %
% % For tables use
% \begin{table}
% % table caption is above the table
% \caption{Please write your table caption here}
% \label{tab:1}       % Give a unique label
% % For LaTeX tables use
% \begin{tabular}{lll}
% \hline\noalign{\smallskip}
% first & second & third  \\
% \noalign{\smallskip}\hline\noalign{\smallskip}
% number & number & number \\
% number & number & number \\
% \noalign{\smallskip}\hline
% \end{tabular}
% \end{table}

% The superior performance of the SSA using a Random Walk sampler over the standard Random Walk approach is most likely due to the SSA using a collection of Markov Chain Samplers at each stage instead of a single chain, and that the SSA sampling takes place on the prior as opposed to the posterior.

\subsection{Self-avoiding walks}\label{sec.saw}
In this section, we consider random walks of length $n$ on the
two-dimensional lattice of integers, starting from the origin. In
particular, we are interested in estimating the following quantities:
\newpage
\begin{enumerate}
	\item $c_n$: the number of SAWs of length $n$,
	\item $\Delta_n$: the expected distance of the final SAW coordinate to the origin.
	
\end{enumerate}

To put these SAW problems into the SSA framework, define the set of
directions, $\scX =
\{\mathrm{Left, Right, Up, Down}\}^n$, and let $f$ be the uniform pdf
on $\scX$. Let $\xi(\bx)$ denote the final coordinate of the random
walk represented by the directions vector $\bx$. 
We have $c_n = \Em_f \left[\Ind{ \{ \bX \text{ is SAW} \}}
  \right]$ and $\Delta_n = \Em_f \left[ \| \xi(\bX) \| \, \, \big|
  \,\Ind{ \{ \bX \text{ is SAW} \}} \right]$.

Next, we let $\scX_t \subseteq \scX$ be the set of all directions
vectors that yield a valid self-avoiding walk of length at least $t$,
for $0 \leq t \leq n$. In addition, we define $\scZ_t$ to be the set
of all directions vectors that yield a self-avoiding walk of 
length (exactly) $t$, for $1 \leq t \leq n$. The
above gives the required partition of~$\scX$. Moreover, the simulation
from $f_t(\bx) = f\left(\bx \mid \bx \in {\scX}_{t-1} \right)$,
reduces to the uniform selection of the SAW's direction at time $1
\leq t \leq n$.

Our experimental setting for SAWs of lengths $n$ is as follows. We set
the sample size of the SSA to be $N_t=1000$ for all $t=1,\ldots,
n$. In this experiment, we are  interested in both the probability
that $\bX$ lies in $\scZ_n$, and the expected distance of $\bX \in
\scZ_n$ (uniformly selected) to the origin. These give us the required  estimators of $c_n$ and $\Delta_n$, respectively.
The leftmost plot of Fig.\,\ref{fig.saw.experimental.error} summarizes a {\em percent error} (PE), which is defined by
\[
	\mathrm{PE} = 100 \, \frac{\widehat{c}_n - c_n}{c_n},
\]
where  $\widehat{c}_n$ stands for  the SSA's estimator of $c_n$.

\begin{figure}[h]	
	\begin{subfigure}{0.5\textwidth}
		 \includegraphics[width=0.8\textwidth]{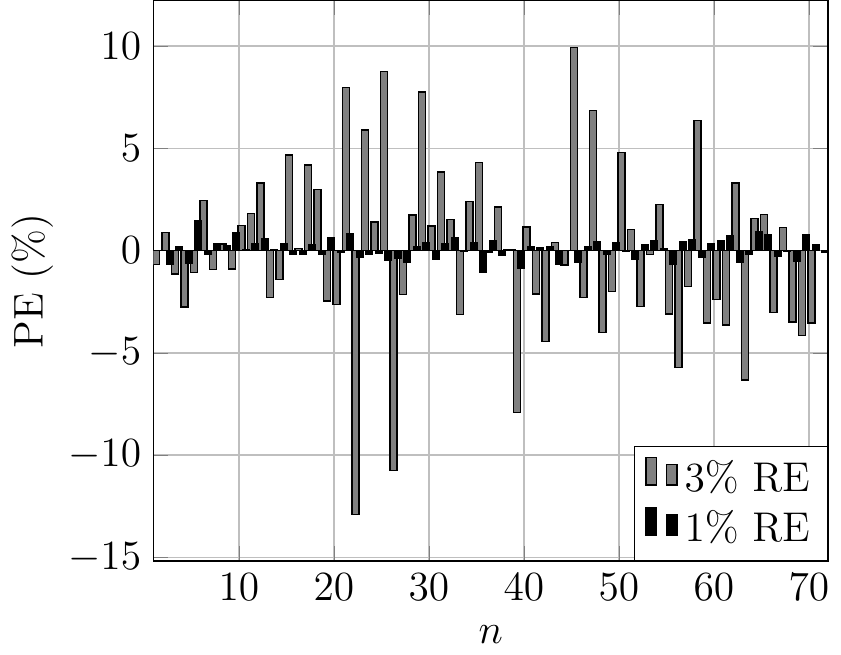}\label{fig.saw.experimental.error1}
	\end{subfigure}
	\quad
	\begin{subfigure}{0.5\textwidth}		
		 \includegraphics[width=0.8\textwidth]{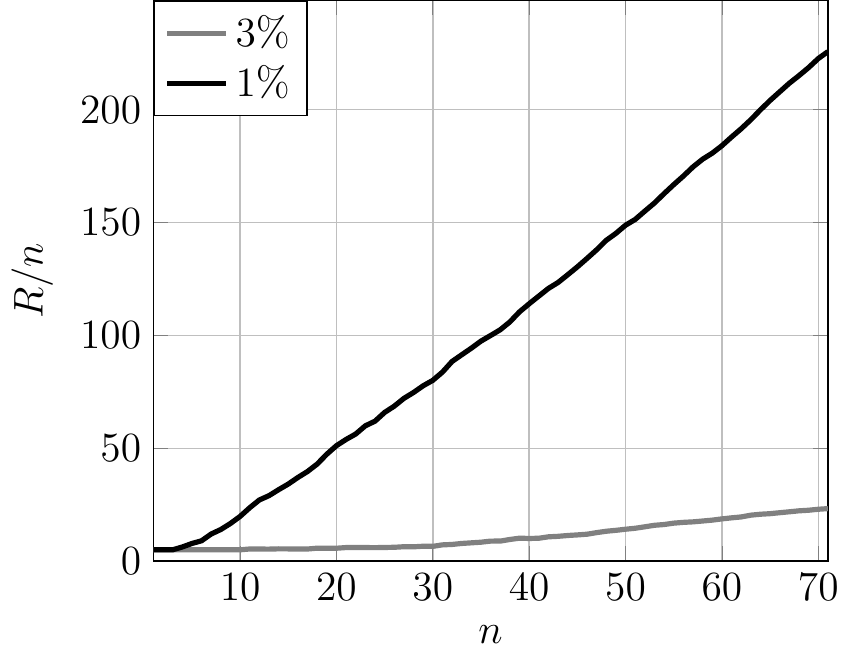}\label{fig.saw.experimental.error2}
	\end{subfigure}	
	\caption{The PE (leftmost plot) and the number of independent runs divided by SAW's length (rightmost  plot) of the SSA  as a function of SAW length $n$ for $3\%$ and $1\%$ RE.} \label{fig.saw.experimental.error}
\end{figure}

In order to explore the convergence of the SSA estimates to the true quantity of interest, the SSA was executed for a sufficient number of times     to obtain  $3\%$ and $1\%$ relative error (RE) \citep{rubinstein17}, respectively. The exact $c_n$ values for $n=1,\ldots,71$ were taken  from \citep{Guttmann2001,Jensen2004}); naturally, when we allow a smaller RE, that is, when we increase $R$, the estimator converges to the true value $c_n$, as can be observed in leftmost plot of Fig.\,\ref{fig.saw.experimental.error}. In addition, the rightmost plot of Fig.\,\ref{fig.saw.experimental.error}  shows that regardless of the RE,  the required number of independent SSA runs $(R)$ divided by SAW's length $(n)$, is  growing linearly with $n$.

%Finally, we investigate the SSA convergence by considering the following asymptotic property \citep{Noonan1998}:
%\[
%	\mu = \lim_{n \to \infty} c_n^{\frac{1}{n}} \in [\underline{\mu}, \overline{\mu} ] = [2.62002, 2.679192495].
%\]

% \begin{figure}[h]	
%  \center
%      {\includegraphics[width=0.4 \textwidth]{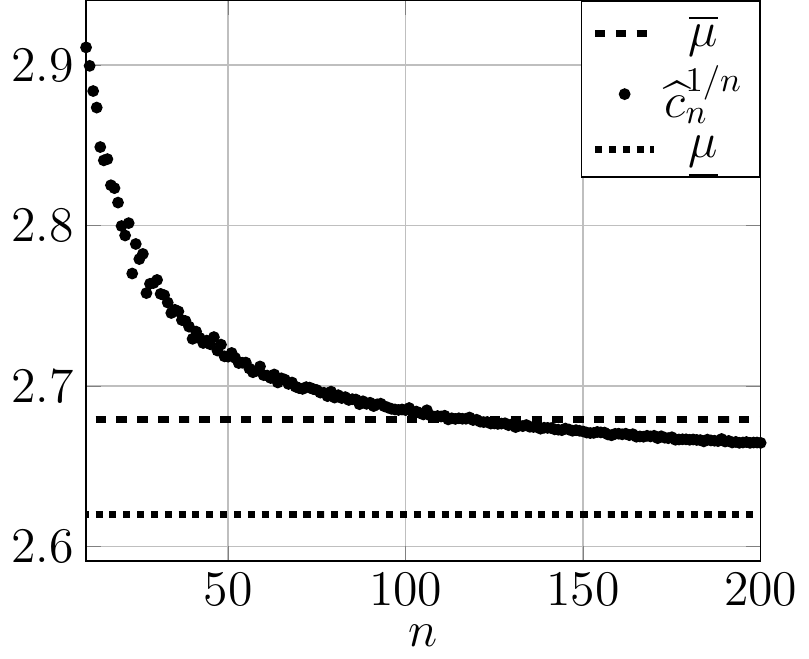}}
%       \caption{The $\widehat{c}_n^{1/n}$  as a function of the SAW's length $n$.
%   } \label{fig.bounds.n}
%   \end{figure}

  To further investigate the SSA convergence, we consider the two properties of SAW's. In particular, the following holds \citep{PhysRevLett.49.1062,DuminilCopin2013,BEYER1972176,Noonan1998}
  \begin{enumerate}
  	\item $\mu = \lim_{n \to \infty} c_n^{\frac{1}{n}} \in [\underline{\mu}, \overline{\mu} ] = [2.62002, 2.679192495]$.
  	
  	\item 	$\underline{\Delta}_n = n^{\frac{1}{4}} \leq \Delta_n \approx n^{3/4}=\widetilde{\Delta}_n \ll n = \overline{\Delta}_n$.
  \end{enumerate}
  
 Fig.\,\ref{fig.bounds.n} summarizes our results compared to these
 bounds. In particular, we run the SSA to achieve the $3\%$ RE (for
 $\hat{c}_n$) for $1 \leq n \leq 200$. It can be clearly observed,
 that the estimator $\widehat{c}_n^{1/n}$ converges toward the
 $[\underline{\mu}, \overline{\mu} ]$ interval  as $n$ grows. It is
 interesting to note that, at least for small $n$,  $\Delta_n$ seems
 to grow at a rate {\em smaller} than the suggested $3/4$.
 % In particular, we run the SSA to achieve the $3\%$ RE for $1 \leq n \leq 200$. It is interesting to note that the estimator $\widehat{c}_n^{1/n}$ converges toward the $[\underline{\mu}, \overline{\mu} ]$ interval  as $n$ grows (leftmost plot of Fig.\,\ref{fig.bounds.n}), and $\widehat{\Delta}_n$ , the SSA estimator of $\Delta_n$ is within the known bounds  (rightmost plot of Fig.\,\ref{fig.bounds.n}).
  \begin{figure}[H]	
  	\begin{subfigure}{0.5\textwidth}
  		{\includegraphics[width=0.8 \textwidth]{SawMu.pdf}\label{fig.saw.experimental.error11}}	
  	\end{subfigure}
  	\begin{subfigure}{0.5\textwidth}		
  		 {\includegraphics[width=0.8 \textwidth]{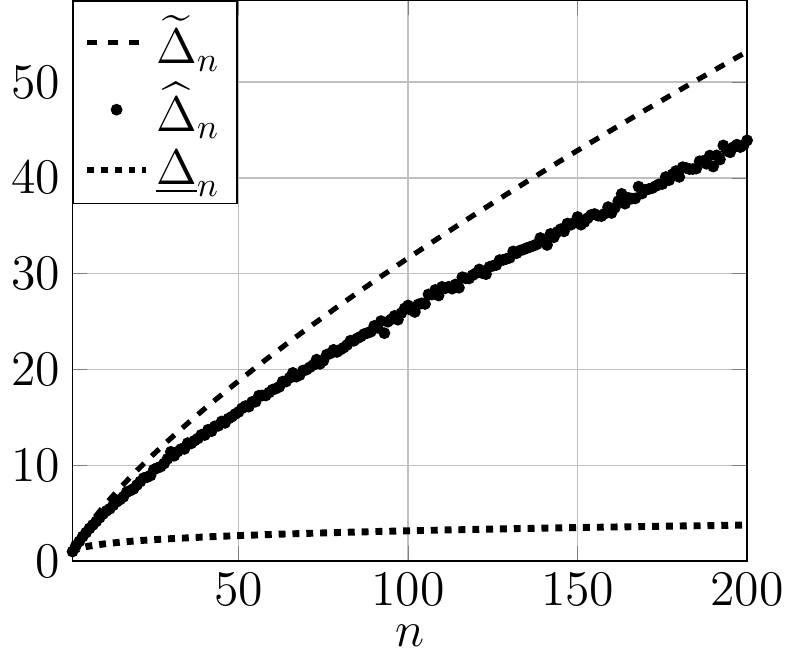}\label{fig.saw.experimental.error21}}
  	\end{subfigure}
  \caption{The $\widehat{c}_n^{1/n}$ and the $\widehat{\Delta}_n$ as a function of the SAW's length $n$.
  } \label{fig.bounds.n}
  \end{figure}
  \section{Discussion}\label{sec.conclusion}
  In this paper we described a general procedure for multi-dimensional
  integration, the SSA, and applied it to various problems from
  different research domains. We showed that this method belongs to a
  very general class of SMC algorithms and developed its theoretical
  foundation. The proposed SSA is relatively easy to implement and our
  numerical study indicates that the SSA yields good performance in
  practice. However, it is important to note that generally speaking,
  the efficiency of the SSA and similar sequential algorithms is
  heavily dependent on the mixing time of the corresponding Markov
  chains that are used for sampling. A rigorous analysis of the mixing
  time for different problems is thus of great interest. Finally,
  based on our numerical study, it will be interesting to apply the
  SSA to other practical problems.

 \section{Acknowledgements} 
 This work was supported by the Australian Research Council Centre of Excellence for Mathematical \& Statistical Frontiers, under grant number CE140100049.
 % BibTeX users please use one of
\bibliographystyle{acm}      % basic style, author-year citations
%\bibliographystyle{spmpsci}      % mathematics and physical sciences
%\bibliographystyle{spphys}       % APS-like style for physics
%\bibliography{}   % name your BibTeX data base

% Non-BibTeX users please use

\bibliography{SSA}

  \appendix
  \section*{Appendix}
  \small
  \section{Technical arguments}\label{appendix.proofs}
  {\it Proof of Theorem \ref{thm.unbiased}}.
  Recall that  ${\scZ}_1,\ldots, {\scZ}_{n}$ is a partition of the set~${\scX}$, so from the law of total probability we have
  \[
    \Em_f \left[ \Hfunc \left(\bX \right) \right] = \sum_{t=1}^{n} { \Em_f\left[\Hfunc\left(\bX\right) \mid \bX \in  {\scZ}_t \right] \Pm_f\left(\bX \in  {\scZ}_t \right)}.
  \]
  By the linearity of expectation, and since $\Ct{} = \sum_{t=1}^{n}{\Ct{t}}$, it will be sufficient to show that for all $t = 1, \ldots, n$, it holds that
  \[
    \Em \left[ \Ct{t} \right] = \Em_f\left[\Hfunc\left(\bX\right) \mid \bX \in  {\scZ}_t \right] \Pm_f\left(\bX \in  {\scZ}_t \right).
  \]
  To see this, we need the following.
  \begin{enumerate}
    \item Although that the samples in the $\mathcal{Z}_{t}$ set for $1 \leq t \leq n$ are not independent due to MCMC and splitting usage,  they still have the same distribution; that is, for all $t$, it holds  that
    \beq\label{eq.cond.exp}
       \Em \left[ \sum_{\mathbf{X} \in  \mathcal{Z}_t}{\Hfunc(\mathbf{X})} \, \Big| \, \left|\mathcal{Z}_t \right| \right]={\left|\mathcal{Z}_t \right|} \, \Em_f\left[\Hfunc\left(\bX\right) \mid \bX \in  \mathcal{Z}_t \right].
    \eeq    
  \item From the unbiasedness of multilevel splitting \citep{Kroese2011, Botev2012}, it holds for all $1 \leq t \leq n$ that
      \beq\label{eq.ms.unbiasedness}
      \Em \left[ \Pt{t}  \right] =
      \Em\left[ \left(1-\Rt{t}\right)\prod_{j=0}^{t-1}{\Rt{j}}   \right] = 		
       \Pm_f \left( \bX \in {\scZ}_t \right).
    \eeq	
  \end{enumerate}
  Combining \eqref{eq.cond.exp} and \eqref{eq.ms.unbiasedness} with a conditioning on the cardinalities of the $\mathcal{Z}_t$ sets, we complete the proof with:
  
  \begin{align*}
  \Em \left[\Ct{t} \right] &= \Em \left[\Ht{t} \, \Pt{t} \right]
  = \Em \left[\Pt{t} \frac{1}{|\mathcal{Z}_t|}  \sum_{\mathbf{X} \in  \mathcal{Z}_t}{\Hfunc(\mathbf{X})} \right] \\
  & = \Em \left[ \Em \left[
  \Pt{t} \frac{1}{|\mathcal{Z}_t|}  \sum_{\mathbf{X} \in  \mathcal{Z}_t}{\Hfunc(\mathbf{X})}  \, \Bigg| \, |\mathcal{Z}_0|, \ldots, |\mathcal{Z}_{t}|
  \right] \right] \\
  & = \Em \left[ \Pt{t} \frac{1}{|\mathcal{Z}_t|}
  \Em \left[
    \sum_{\mathbf{X} \in  \mathcal{Z}_t}{\Hfunc(\mathbf{X})} \, \Bigg| \,  |\mathcal{Z}_0|, \ldots, |\mathcal{Z}_t|
  \right] \right]\\
   &\underbrace{=}_{\eqref{eq.cond.exp}} \Em \left[  \Pt{t} \frac{1}{|\mathcal{Z}_t|} | \mathcal{Z}_t | \, \Em_f\left[\Hfunc\left(\bX\right) \mid \bX \in  {\scZ}_t \right] \right] \\
  & =   \Em_f\left[H\left(\bX\right) \mid \bX \in  {\scZ}_t \right] \,\Em \left[  \Pt{t}   \right] \underbrace{=}_{\eqref{eq.ms.unbiasedness}}  \Em_f\left[H\left(\bX\right) \mid \bX \in  {\scZ}_t \right] \Pm_f \left( \bX \in {\scZ}_t \right).
  \end{align*}
  \qed

  {\it Proof of Lemma \ref{lem.ratio.helper}}.
  %\begin{proof}
  Recall that
  \beq\label{eq.tot.var.bound}
    \| \widehat{\mydist} - \mydist \|_{\mathrm{TV}} = \frac{1}{b-a} \sup_{\Hfunc\,:\,\R \rightarrow [a,b]} {\left| \int{\Hfunc(x)\,\widehat{\mydist}(\di x) } - \int{\Hfunc(x)\,{\mydist}(\di x)} \right|}, \nonumber
  \eeq
  for any function $\Hfunc:\R \rightarrow [a,b]$, (Proposition 3 in \cite{roberts2004general}). Hence,
  \[
     \Em_{{\mydist}}\left[X \right] - (b-a)\frac{\varepsilon \, a}{4(b-a)} \leq \Em_{\widehat{\mydist}}\left[X \right] \leq   \Em_{{\mydist}}\left[X \right]  + (b-a)\frac{\varepsilon \, a}{4(b-a)}.
  \]
  Combining this with the fact that $X \geq a$, we arrive at
  \beq\label{eq.helper1}
   1- \frac{\varepsilon}{4}  \leq    1 - \frac{\varepsilon \, a}{4\Em_{{\mydist}}\left[X \right]} \leq  \frac{\Em_{\widehat{\mydist}}\left[X \right]}{\Em_{{\mydist}}\left[X \right]} \leq 1+ \frac{\varepsilon \, a}{4\Em_{{\mydist}}\left[X \right]}	 \leq 1+ \frac{\varepsilon}{4}.
  \eeq
  Next, since
  \[
    \Em_{\widehat{\mydist}}\left[\frac{1}{m}\sum_{i=1}^m{X_i} \right] = \Em_{\widehat{\mydist}}\left[X \right],
  \]
  we can apply the \cite{hoefding} inequality, to obtain
  \beq\label{eq.helper2}
    \Pm\left( 1 - \frac{\varepsilon}{4} \leq \frac{\frac{1}{m}\sum_{i=1}^m{X_i}}{\Em_{\widehat{\mydist}}\left[X \right]}  \leq 1+ \frac{\varepsilon}{4} \right) \geq 1-\delta,
  \eeq
  for
  \[
    m = \frac{(b-a)^2\ln(2/\delta)}{2 (\varepsilon/4)^2 \left(\Em_{\widehat{\mydist}}\left[X \right]\right)^2} \geq \frac{(b-a)^2\ln(2/\delta)}{2 (\varepsilon/4)^2 a^2}.
  \]
  Finally, we complete the proof by combining \eqref{eq.helper1} and \eqref{eq.helper2}, to obtain:
  \begin{align*}
    &\Pm\left(
    1+\varepsilon \leq \left( 1 - \frac{\varepsilon/2}{2} \right)^2 \leq
    \frac{\Em_{\widehat{\mydist}}\left[X \right]}{\Em_{{\mydist}}\left[X \right]}
    \frac{\frac{1}{m}\sum_{i=1}^m{X_i}}{\Em_{\widehat{\mydist}}\left[X \right]}
    \leq \left( 1 + \frac{\varepsilon/2}{2} \right)^2 \leq 1+\varepsilon
    \right) \\
     &=   \Pm\left( (1-\varepsilon) \Em_\mydist[X] \leq \frac{1}{m}\sum_{i=1}^m{X_i}  \leq (1+\varepsilon) \Em_\mydist[X] \right) \geq 1-\delta. \quad \quad \quad \quad \quad  \qed
  \end{align*}
  \qed
  %\end{proof}

  {\it Proof of Theorem \ref{thm.mcmc}}.
  The proof of this theorem consists of the following steps.
  
  \begin{enumerate}
    \item In Lemma \ref{lem1}, we prove that an existence of an $\left(\varepsilon , \frac{\delta}{n} \right)$-approximation to $\{z_t\}_{1 \leq t \leq n}$ implies an existence of an $\left(\varepsilon , \delta \right)$-approximation to $z=\sum_{t=1}^n{z_t}$.
    
    \item In Lemma \ref{lem2}, we  prove that an existence of an $\left(\frac{\varepsilon}{4} , \frac{\delta}{2n} \right)$-approximation to $\{ \varphi_t \}_{1 \leq t \leq n }$ and $\{ p_t \}_{1 \leq t \leq n }$ implies an $\left(\varepsilon , \frac{\delta}{n} \right)$-approximation existence  to $\{ z_t \}_{1 \leq t \leq n}$.
  
   \item In Lemmas \ref{lem.ratio.helper}, \ref{lem3} and \ref{lem4}, we provide the required $\left(\frac{\varepsilon}{4} , \frac{\delta}{2n} \right)$-approximations to $\varphi_t$ and $p_t$ for $1 \leq t \leq n$.
    
  \end{enumerate}
  
  \begin{lemma}\label{lem1}
  Suppose that for all $t=1,\ldots,n$,  an $\left(\varepsilon , \frac{\delta}{n}\right)$-approximation to $z_t$ exists. Then,
  \[
    \Pm \left(z (1-\varepsilon) \leq \Ct{} \leq z(1+ \varepsilon) \right) \geq 1-\delta.
  \]
  \end{lemma}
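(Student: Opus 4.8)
The plan is to prove this by a straightforward union bound over the $n$ level estimators, exploiting the fact that the per-level failure probability has been deliberately set to $\delta/n$. First I would introduce, for each $t=1,\ldots,n$, the per-level ``success'' event
\[
  A_t = \left\{ z_t(1-\varepsilon) \leq \Ct{t} \leq z_t(1+\varepsilon) \right\}.
\]
By hypothesis each $\Ct{t}$ is an $(\varepsilon,\delta/n)$-approximation to $z_t$, so $\Pm(A_t)\geq 1-\delta/n$, equivalently $\Pm(A_t^c)\leq \delta/n$.

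Next I would bound the probability that \emph{all} the success events hold simultaneously. By Boole's inequality,
\[
  \Pm\left( \bigcup_{t=1}^n A_t^c \right) \leq \sum_{t=1}^n \Pm(A_t^c) \leq n\cdot\frac{\delta}{n} = \delta,
\]
and therefore $\Pm\!\left( \bigcap_{t=1}^n A_t \right) \geq 1-\delta$. A convenient feature of this step is that it requires no independence among the $\{\Ct{t}\}$, which matters because the splitting and MCMC mechanism of Algorithm~\ref{alg.SSA} makes these estimators dependent.

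Finally I would argue that the intersection event forces the desired two-sided bound on the aggregate estimator. On $\bigcap_{t=1}^n A_t$, I would simply add the $n$ inequalities $z_t(1-\varepsilon)\leq \Ct{t}\leq z_t(1+\varepsilon)$ over $t$. Using $\Ct{}=\sum_{t=1}^n \Ct{t}$ and $z=\sum_{t=1}^n z_t$, and factoring the common constants, this gives
\[
  (1-\varepsilon)z = \sum_{t=1}^n z_t(1-\varepsilon) \leq \sum_{t=1}^n \Ct{t} = \Ct{} \leq \sum_{t=1}^n z_t(1+\varepsilon) = (1+\varepsilon)z.
\]
Hence $\bigcap_{t=1}^n A_t \subseteq \{ z(1-\varepsilon)\leq \Ct{}\leq z(1+\varepsilon)\}$, and combining this inclusion with the union-bound estimate yields the claim.

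I do not expect a genuine obstacle here: the content of the lemma is essentially the bookkeeping of the error budget. The one point to be careful about is precisely that each level estimator is permitted to fail only with probability $\delta/n$ rather than $\delta$, which is exactly what keeps the total failure probability over the $n$ terms at $\delta$. The positivity of $\Hfunc$ assumed in Theorem~\ref{thm.mcmc} guarantees $z_t>0$ and hence $z>0$, so the multiplicative $(\varepsilon,\delta)$-statement is well posed; note, however, that the summation of the raw inequalities in the last display is valid irrespective of the signs of the $z_t$.
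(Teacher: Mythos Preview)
Your proposal is correct and follows essentially the same route as the paper: define per-level success events, apply Boole's inequality to bound the probability that some level fails by $n\cdot(\delta/n)=\delta$, and on the joint success event sum the two-sided bounds to pass from $\{\Ct{t}\}$ to $\Ct{}$. Your explicit remark that no independence among the $\Ct{t}$ is needed is a helpful clarification that the paper leaves implicit.
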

  \begin{proof}
  %To start with, recall Boole's inequality (union bound \citep{Bonferroni36}) countable set of events  $A_1, A_2, A_3,\ldots$. In particular,  it holds that
  %\beq\label{eq.prop.union.bound}
  %	\Pm\left(\cup_i A_i \right) \leq \sum_i{\Pm(A_i)}.
  %\eeq
  From the assumption of the existence of the $\left(\varepsilon , \frac{\delta}{n} \right)$-approximation to $z_t$ for each $1 \leq t \leq n$, we have
  \[
    \Pm \left( \left|\Ct{t} - z_t\right| \leq \varepsilon z_t \right) \geq 1 - \frac{\delta}{n}, \quad \text{and} \quad \Pm \left( \left|\Ct{t} - z_t\right| > \varepsilon z_t \right) <  \frac{\delta}{n}.
  \]
  By using Boole's inequality (union bound),
  %\eqref{eq.prop.union.bound}
    we arrive at
  \[
    \Pm \left(\exists \, t \; : \; \left|\Ct{t} - z_t\right| > \varepsilon z_t \right) \leq \sum_{t=1}^{n}  \Pm \left( \left|\Ct{t} - z_t\right| > \varepsilon z_t \right) < n \frac{\delta}{n} = \delta,
  \]
  that is, it holds for all $t=1,\ldots,n$, that
  \[
    \Pm \left(\forall \, t \; : \; \left|\Ct{t} - z_t\right| \leq \varepsilon z_t \right)
    = 1-\Pm \left(\exists \, t \; : \; \left|\Ct{t} - z_t\right| > \varepsilon c_t \right) \geq 1-\delta,
  \]
  and hence,
  \begin{align*}
    \Pm \left((1-\varepsilon)\sum_{t=1}^{n} {z_t} \leq \sum_{t=1}^{n} {\Ct{t}} \leq (1+\varepsilon)\sum_{t=1}^{n} {z_t}  \right) = \Pm \left( \Ct{} \in \leq z(1 \pm \varepsilon) \right) \geq 1-\delta.
  \end{align*}
  \qed
  \end{proof}

  \begin{lemma}\label{lem2}
  Suppose that for all $t=1,\ldots,n$, there exists an $\left(\frac{\varepsilon}{4} , \frac{\delta}{2n} \right)$-approximation to $\varphi_t$ and $p_t$. Then,
  \[
    \Pm \left(z_t (1-\varepsilon) \leq \Ct{t} \leq z_t(1+ \varepsilon) \right) \geq 1- \frac{\delta}{n} \quad \text{for all }t=1,\ldots,n.
  \]
  \end{lemma}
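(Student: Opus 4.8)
The plan is to reduce this multiplicative statement to the two given one-sided (actually two-sided) approximation guarantees via a union bound, followed by a short algebraic check that squaring a $(1\pm\varepsilon/4)$ factor stays within $(1\pm\varepsilon)$. Recall that $z_t = \varphi_t\,p_t$ and $\Ct{t} = \Ht{t}\,\Pt{t}$, and that $\Hfunc$ being strictly positive forces $\varphi_t > 0$ and $p_t > 0$, so every quantity in sight is positive and multiplying inequalities is safe.

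First I would introduce the two ``failure'' events
\[
  B_\varphi = \left\{ \left| \Ht{t} - \varphi_t \right| > \tfrac{\varepsilon}{4}\varphi_t \right\},
  \qquad
  B_p = \left\{ \left| \Pt{t} - p_t \right| > \tfrac{\varepsilon}{4}p_t \right\}.
\]
By the hypothesis that $\Ht{t}$ and $\Pt{t}$ are each $\left(\tfrac{\varepsilon}{4},\tfrac{\delta}{2n}\right)$-approximations, we have $\Pm(B_\varphi) < \tfrac{\delta}{2n}$ and $\Pm(B_p) < \tfrac{\delta}{2n}$. Applying Boole's inequality yields $\Pm(B_\varphi \cup B_p) < \tfrac{\delta}{n}$, so on the complementary ``good'' event $G = B_\varphi^c \cap B_p^c$, which satisfies $\Pm(G) \geq 1 - \tfrac{\delta}{n}$, both two-sided bounds hold simultaneously:
\[
  \left(1-\tfrac{\varepsilon}{4}\right)\varphi_t \leq \Ht{t} \leq \left(1+\tfrac{\varepsilon}{4}\right)\varphi_t,
  \qquad
  \left(1-\tfrac{\varepsilon}{4}\right)p_t \leq \Pt{t} \leq \left(1+\tfrac{\varepsilon}{4}\right)p_t.
\]

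Next I would multiply these two chains of inequalities (valid since all factors are positive) to obtain
\[
  \left(1-\tfrac{\varepsilon}{4}\right)^2 z_t \;\leq\; \Ct{t} \;\leq\; \left(1+\tfrac{\varepsilon}{4}\right)^2 z_t
  \qquad \text{on } G.
\]
The final step is the elementary check that these squared envelopes are absorbed into the target envelope for $\varepsilon \in (0,1)$: expanding gives $\left(1+\tfrac{\varepsilon}{4}\right)^2 = 1 + \tfrac{\varepsilon}{2} + \tfrac{\varepsilon^2}{16} \leq 1+\varepsilon$ (equivalent to $\tfrac{\varepsilon^2}{16}\leq\tfrac{\varepsilon}{2}$, i.e. $\varepsilon\leq 8$), and $\left(1-\tfrac{\varepsilon}{4}\right)^2 = 1 - \tfrac{\varepsilon}{2} + \tfrac{\varepsilon^2}{16} \geq 1-\varepsilon$ trivially. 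Hence on $G$ we have $(1-\varepsilon)z_t \leq \Ct{t} \leq (1+\varepsilon)z_t$, and since $\Pm(G)\geq 1-\tfrac{\delta}{n}$ this is exactly the claimed $\left(\varepsilon,\tfrac{\delta}{n}\right)$-approximation to $z_t$.

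This argument is essentially routine; there is no genuine obstacle. The only points requiring a little care are bookkeeping the probabilities correctly in the union bound (so that the two $\tfrac{\delta}{2n}$ terms combine to $\tfrac{\delta}{n}$ rather than $\tfrac{\delta}{2n}$), and ensuring the strict positivity of $\varphi_t$ and $p_t$ is invoked so that the product of the inequalities preserves direction. The choice of $\varepsilon/4$ in the hypothesis is precisely what makes the squared-factor slack close with room to spare, mirroring the same device already used in the proof of Lemma~\ref{lem.ratio.helper}.
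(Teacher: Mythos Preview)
Your proof is correct and follows essentially the same route as the paper: a union bound on the two $\left(\tfrac{\varepsilon}{4},\tfrac{\delta}{2n}\right)$ approximation events, multiplication of the resulting two-sided bounds, and then the algebraic check that $(1\pm\varepsilon/4)^2$ lies within $1\pm\varepsilon$. The only cosmetic difference is that the paper verifies this last step via a general power inequality $\big(1-\tfrac{\varepsilon/2}{n}\big)^n\geq 1-\varepsilon$ and $\big(1+\tfrac{\varepsilon/2}{n}\big)^n\leq 1+\varepsilon$ (established there for reuse in Lemma~\ref{lem4}), whereas you expand the square directly; your version is slightly more elementary for the case at hand.
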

  
  \begin{proof}
  By  assuming an existence  of $\left(\frac{\varepsilon}{4} , \frac{\delta}{2n} \right)$-approximation to $\varphi_t$ and $p_t$, namely:
  \[
    \Pm \left( \left| \frac{\Ht{t}}{h_t} -1\right| \leq \varepsilon/4 \right) \geq 1-\delta/2n, \quad \text{and} \quad  \Pm \left( \left| \frac{\Pt{t}}{p_t} -1\right| \leq \varepsilon/4 \right)  \geq 1-\delta/2n,
  \]
   and combining it with the union bound, %\eqref{eq.prop.union.bound}
   we arrive at
  \[
    \Pm \left( 1-\varepsilon \underbrace{\leq}_{(*)} \left( 1- \frac{\varepsilon/2}{2} \right)^2 \leq \frac{\Ht{t}\Pt{t}}{\varphi_t\,p_t} \leq  \left( 1+ \frac{\varepsilon/2}{2} \right)^2 \underbrace{\leq}_{(*)} 1+ \varepsilon \right) \geq 1-\delta/n,
  \]
  where $(*)$ follows from the fact that for any $0 < |\varepsilon| < 1$ and $n \in \N$ we have
  \beq\label{eq.prop.power.error}
    1-\varepsilon \leq \left( 1 - \frac{\varepsilon/2}{n} \right)^n  \quad  \text{and} \quad \left( 1 + \frac{\varepsilon/2}{n} \right)^n \leq 1+\varepsilon.
  \eeq
  To see that \eqref{eq.prop.power.error} holds, note that by using exponential inequalities from \cite{bullen1998dictionary}, we have that $\left( 1 - \frac{\varepsilon/2}{n} \right)^n \geq 1-\frac{\varepsilon}{2} \geq 1-\varepsilon$. In addition, it holds that $|\e^\varepsilon - 1| < 7\varepsilon/4 $, and hence:
  \[
  \left( 1 + \frac{\varepsilon/2}{n} \right)^n \leq \e^{\varepsilon/2} \leq 1 + \frac{7(\varepsilon/2)}{4} \leq 1+\varepsilon. \eqno \qed
  %
  %\square 
  \] 
  \end{proof}
  
   To complete the proof of Theorem \ref{thm.mcmc}, we need to provide $\left(\frac{\varepsilon}{4} , \frac{\delta}{2n} \right)$-approximations to both $\varphi_t$ and $p_t$.
  However, at this stage we have to take into account a specific splitting strategy,  since the SSA sample size bounds depend on the latter. Here we examine the independent setting, for which the samples in each $\{{\cal X}_t\}_{1 \leq t \leq n}$  set are independent. That is, we use multiple runs of the SSA at each stage $(t=1,\ldots,n)$ of the algorithm execution. See Remark \ref{rem.split.strategy} for further details.

  \begin{remark}[Lemma \ref{lem.ratio.helper} for binary random variables]\label{rem.rt.upper.bound.bin} \normalfont
  For a binary random variable $X \in \{ 0,1\}$, with a known lower bound  on its mean, Lemma \ref{lem.ratio.helper} can be strengthened via the usage of \cite{chernoff1952} bound instead of the \cite{hoefding} inequality. In particular, the following holds.
  
  Let $X \sim \widehat{\mydist}(x)$ be a binary random variable   and let $X_1,\ldots, X_m$ be its independent realizations. Then, provided that
  $\Em_{\widehat{\mydist}}[X] \geq \Em_{\widehat{\mydist}}'[X] $,
    \[
     \| \widehat{\mydist} - \mydist \|_{\mathrm{TV}} \leq \frac{\varepsilon \, \Em_{\widehat{\mydist}}'[X]}{4}, \quad \text{and} \quad m \geq \frac{3\ln(2/\delta)}{(\varepsilon/4)^2 (\Em_{\widehat{\mydist}}'[X])^2 },
   \]
   it holds that:
  \[
    \Pm\left( (1-\varepsilon) \Em_{\mydist}[X] \leq \frac{1}{m}\sum_{i=1}^m{X_i}  \leq (1+\varepsilon) \Em_{\mydist}[X] \right) \geq 1-\delta.
  \]
  The corresponding proof is almost identical to the one presented in Lemma \ref{lem.ratio.helper}. The major difference is the bound on the sample size in \eqref{eq.helper2}, which is achieved via the Chernoff bound from~\cite[Theorem 10.1]{Mitzenmacher} instead of Hoeffding's inequality.
  \end{remark}

  \begin{lemma}\label{lem3}
  Suppose that $a_t = \min_{\bx \in \scZ_t}{\{ \Hfunc(\bx) \}}$, $b_t = \max_{\bx \in \scZ_t}{\{ \Hfunc(\bx) \}}$ for all $t=1,\ldots,n$. Then,
  provided that the samples in the ${\cal Z}_t$ set are independent, and are distributed according to $\pmc{t}$
   such that
    \[
      \| \pmc{t} - \nu_t \|_{\mathrm{TV}} \leq \frac{\varepsilon\, a_t}{16(b_t - a_t)}, \quad \text{and} \quad |{\cal Z}_t| \geq \frac{128(b_t-a_t)^2\ln (4n/\delta)}{\varepsilon^2 a_t^2},
    \]	
  then $\Ht{t} ={|{\cal Z}_t|^{-1}} \sum_{\bX \in {\cal Z}_t}{ \Hfunc(\bX) } $ is an $\left(\frac{\varepsilon}{4} , \frac{\delta}{2n} \right)$-approximation to $\varphi_t$.
   \end{lemma}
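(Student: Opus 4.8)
The plan is to obtain Lemma~\ref{lem3} as a direct specialization of Lemma~\ref{lem.ratio.helper}, applied to the bounded, strictly positive univariate random variable $X=\Hfunc(\bX)$ with $\bX\sim\pmc{t}$. First I would set up the correspondence between the two statements. Because every particle contributing to the multiset ${\cal Z}_t$ lies in the stratum $\scZ_t$, the definitions $a_t=\min_{\bx\in\scZ_t}\Hfunc(\bx)$ and $b_t=\max_{\bx\in\scZ_t}\Hfunc(\bx)$ guarantee $a_t\le X\le b_t$, and the strict positivity of $\Hfunc$ assumed in Theorem~\ref{thm.mcmc} gives $a_t>0$, so $X$ is strictly positive. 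The target quantity is $\varphi_t=\Em_f[\Hfunc(\bX)\mid\bX\in\scZ_t]=\Em_{\nu_t}[X]$, the mean of $X$ under the true conditional law $\nu_t$, while the estimator $\Ht{t}=|{\cal Z}_t|^{-1}\sum_{\bX\in{\cal Z}_t}\Hfunc(\bX)$ is exactly the empirical average of $|{\cal Z}_t|$ realizations of $X$. Under the ISSA independence assumption these realizations are independent and each is drawn from $\pmc{t}$, so they play the role of $X_1,\dots,X_m$ in Lemma~\ref{lem.ratio.helper} with $\widehat{\mydist}=\pmc{t}$, $\mydist=\nu_t$, $a=a_t$, $b=b_t$, and $m=|{\cal Z}_t|$.

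Next I would invoke Lemma~\ref{lem.ratio.helper} with the accuracy parameter replaced by $\varepsilon/4$ and the confidence parameter replaced by $\delta/(2n)$, which is precisely the pair needed to certify an $(\varepsilon/4,\delta/(2n))$-approximation in the sense of Definition~\ref{def.approx}. The only thing to check is that the two hypotheses of Lemma~\ref{lem3} reproduce the two hypotheses of Lemma~\ref{lem.ratio.helper} after this substitution. For the total-variation condition, substituting $\varepsilon/4$ for $\varepsilon$ in $\varepsilon a/(4(b-a))$ yields $\varepsilon a_t/(16(b_t-a_t))$, matching the first hypothesis verbatim. For the sample-size condition, substituting $\varepsilon/4$ for $\varepsilon$ and $\delta/(2n)$ for $\delta$ in the required bound turns $\ln(2/\delta)$ into $\ln(4n/\delta)$ and the denominator $2(\varepsilon/4)^2$ into $2(\varepsilon/16)^2=\varepsilon^2/128$, so the requirement becomes $|{\cal Z}_t|\ge 128(b_t-a_t)^2\ln(4n/\delta)/(\varepsilon^2 a_t^2)$, again matching the second hypothesis. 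The conclusion of Lemma~\ref{lem.ratio.helper} then reads $\Pm\big((1-\varepsilon/4)\varphi_t\le\Ht{t}\le(1+\varepsilon/4)\varphi_t\big)\ge 1-\delta/(2n)$, which is exactly the claimed approximation guarantee.

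The one technical wrinkle to address is that $|{\cal Z}_t|$ is itself random --- it is the number of independently generated particles that land in $\scZ_t$ --- whereas Lemma~\ref{lem.ratio.helper} is stated for a fixed sample count $m$. I would handle this by conditioning on $\{|{\cal Z}_t|=m\}$ for an arbitrary $m$ meeting the stated lower bound and applying the lemma on that event; since the guarantee holds uniformly over every admissible $m$ and the concentration of the empirical mean only sharpens as $m$ grows, the bound is preserved after averaging over the distribution of $|{\cal Z}_t|$. I expect this conditioning step, rather than the matching of the explicit constants, to be the only delicate point, and even it is essentially routine.
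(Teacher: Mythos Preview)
Your proposal is correct and follows essentially the same approach as the paper: both proofs obtain the result as an immediate application of Lemma~\ref{lem.ratio.helper} with the substitutions $\varepsilon\mapsto\varepsilon/4$ and $\delta\mapsto\delta/(2n)$, and then verify that the total-variation and sample-size constants match. The paper's proof is in fact terser---it simply records the two substituted bounds without the additional discussion of the random cardinality $|{\cal Z}_t|$, treating the stated lower bound as a deterministic hypothesis.
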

  
  \begin{proof}
  The proof is an immediate consequence of Lemma \ref{lem.ratio.helper}. In particular, note that
  \[
    \| \pmc{t} - \nu_t \|_{\mathrm{TV}} \leq \frac{\frac{\varepsilon}{4} \, a_t}{4(b_t-a_t)} = \frac{\varepsilon\, a_t}{16(b_t - a_t)} ,
  \]
  and that
  \[
    |{\cal Z}_t| \geq \frac{(b_t-a_t)^2)\ln(2/\frac{\delta}{2n})}{2 (\frac{\varepsilon}{4}/4)^2 a^2 } = \frac{128(b_t-a_t)^2\ln (4n/\delta)}{\varepsilon^2 a_t^2}. \eqno \qed
  \]
  \end{proof}

  \begin{lemma}\label{lem4}
  Suppose that the samples in the ${\cal X}_t$ set are independent, and are distributed according to $\fmc{t}$, such that
    \[
      \| \fmc{t} - \mu_t \|_{\mathrm{TV}} \leq \frac{\varepsilon \,\underline{r}_t}{32n}, \quad \text{and} \quad 		|{\cal X}_t| \geq \frac{3072  \,n^2 \ln (4n^2 /\delta)}{\varepsilon^2 \underline{r}_t^2},
    \]	
    where $\underline{r}_t = \min{\{r_t,1-r_t\}}$ for $1 \leq t \leq n$. Then, $\Pt{t}$ is an $\left(\frac{\varepsilon}{4} , \frac{\delta}{2n} \right)$-approximation to $p_t$.
  \end{lemma}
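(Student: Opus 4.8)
The plan is to recognize $\Pt{t} = (1-\Rt{t})\prod_{j=0}^{t-1}\Rt{j}$ as a product of at most $t \leq n$ estimated factors, and $p_t = (1-r_t)\prod_{i=0}^{t-1} r_i$ as the corresponding product of true factors, and then to control the relative error of each factor separately before multiplying. The key observation is that, under the independence assumption of ISSA, each $\Rt{i} = |\mathcal{Y}_i|/N$ is the sample mean of $N$ independent Bernoulli indicators $\Ind\{\bX \in \scX_i\}$ with $\bX \sim \fmc{i}$, whose target mean under $\mu_i$ is exactly $r_i$; likewise $1-\Rt{t}$ is the sample mean of the complementary indicators $\Ind\{\bX \in \scZ_t\}$, with target mean $1-r_t$ under $\mu_t$. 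Since $r_0 = \Rt{0} = 1$ contributes no error, only the $t$ factors $(1-\Rt{t}), \Rt{1}, \ldots, \Rt{t-1}$ need to be analysed.

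First I would apply the binary (Chernoff-based) strengthening of the ratio lemma, Remark~\ref{rem.rt.upper.bound.bin}, to each of these factors, aiming to make each one an $(\varepsilon/8n, \delta/2n^2)$-approximation of its target. Using $\underline{r}_t = \min\{r_t, 1-r_t\}$ as the common lower bound on the relevant Bernoulli mean (it bounds both $r_t$ and $1-r_t$ from below), the requirements of the remark with parameters $\varepsilon' = \varepsilon/8n$ and $\delta' = \delta/2n^2$ become $\|\fmc{t} - \mu_t\|_{\mathrm{TV}} \leq \varepsilon' \underline{r}_t/4 = \varepsilon \underline{r}_t/(32n)$ and $|\mathcal{X}_t| \geq 3\ln(2/\delta')/((\varepsilon'/4)^2 \underline{r}_t^2) = 3072\, n^2 \ln(4n^2/\delta)/(\varepsilon^2 \underline{r}_t^2)$, which are precisely the two hypotheses assumed in the lemma. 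Thus each factor is an $(\varepsilon/8n, \delta/2n^2)$-approximation to its target.

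Next I would combine the per-factor guarantees. A union bound over the at most $t \leq n$ factors shows that, with probability at least $1 - t \cdot \delta/(2n^2) \geq 1 - \delta/(2n)$, every factor simultaneously lies within relative error $\varepsilon/8n$ of its target. On this event the ratio $\Pt{t}/p_t$ lies in $[(1 - \varepsilon/8n)^t, (1+\varepsilon/8n)^t] \subseteq [(1 - \varepsilon/8n)^n, (1+\varepsilon/8n)^n]$, and the elementary power inequalities \eqref{eq.prop.power.error} (with $\eta = \varepsilon/4$) give $(1-\varepsilon/8n)^n \geq 1 - \varepsilon/4$ and $(1+\varepsilon/8n)^n \leq 1 + \varepsilon/4$. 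Hence $(1-\varepsilon/4)\,p_t \leq \Pt{t} \leq (1+\varepsilon/4)\,p_t$ with probability at least $1 - \delta/(2n)$, which is exactly the claimed $(\varepsilon/4, \delta/2n)$-approximation.

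The routine parts are the constant-matching in the second step and the union bound; the one genuinely delicate point is the lower-bound bookkeeping. Remark~\ref{rem.rt.upper.bound.bin} requires a valid lower bound on the mean under the \emph{sampling} distribution $\fmc{t}$, whereas $\underline{r}_t$ is defined from the \emph{target} $\mu_t$. I would close this gap by noting that the total-variation bound forces $\Em_{\fmc{t}}[\Ind\{\bX \in \scX_t\}]$ (and its complement) to differ from the corresponding target mean by at most $\varepsilon \underline{r}_t/(32n)$, so $\underline{r}_t$ serves as a lower bound on the sampling mean up to a factor $1 - \varepsilon/(32n)$, a discrepancy that only perturbs the required constants negligibly. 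The only other care needed is that the power inequality is invoked with exponent $n$ even though there are only $t \leq n$ factors, which is justified because each factor lies in $(1 - \varepsilon/8n, 1+\varepsilon/8n)$, and raising such a number to a smaller power keeps the product inside the wider $n$-fold interval.
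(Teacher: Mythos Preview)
Your proposal is correct and follows essentially the same route as the paper: reduce to per-factor $(\varepsilon/8n,\delta/2n^2)$-approximations of $r_j$ and $1-r_t$ via the Chernoff-based Remark~\ref{rem.rt.upper.bound.bin}, then combine by the union bound and the power inequalities \eqref{eq.prop.power.error}, with identical constant-matching. You are in fact slightly more careful than the paper in flagging that $\underline{r}_t$ is a lower bound on the \emph{target} mean while Remark~\ref{rem.rt.upper.bound.bin} requires one on the \emph{sampling} mean; the paper passes over this point silently.
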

  
  \begin{proof}
  Recall that $\Pt{t} = \left(1-\Rt{t}\right)\prod_{j=0}^{t-1}{\Rt{j}}$ for $t=1,\ldots, n$. Again, by combining the union bound  with  \eqref{eq.prop.power.error}, we conclude that the desired approximation to $p_t$ can be obtained by deriving the  $\left(\frac{\varepsilon}{8n} , \frac{\delta}{2n^2}\right)$-approximations for  each $r_t$ and $1-r_t$.  In this case, the probability that for all $t = {1,\ldots,n}$, $\Rt{t}/r_t$ satisfies $1 - \varepsilon/8n \leq  {\Rt{t}}/r_t \leq 1+ \varepsilon/8n$ is at least $1-\delta/2n^2$. The same holds for $({1-\Rt{t}})/(1-r_t)$, and thus, we arrive at:
  \[
    \Pm \left( 1-\varepsilon/4 \leq \left(1 - \frac{\frac{\varepsilon}{4}/2}{n} \right)^n  \leq   \frac{\Pt{t}}{p_t}  \leq  \left(1 + \frac{\frac{\varepsilon}{4}/2}{n} \right)^n \leq 1+\varepsilon/4	 \right) \geq 1-\delta/2n.
  \]
  %For the rest of the proof and without the loss of generality, we assume that $r_t \leq 1-r_t$.
  
  The bounds for each $\Rt{t}$ and $(1-\Rt{t})$ are easily achieved via Remark \ref{rem.rt.upper.bound.bin}. In particular, it is not very hard to verify that in order to get an $\left(\frac{\varepsilon}{8n} , \frac{\delta}{2n^2}\right)$-approximation, it is sufficient to take
  \[
    \| \fmc{t} - \mu_t \|_{\mathrm{TV}} \leq \frac{\frac{\varepsilon}{8n} \,\underline{r}_t}{4} =  \frac{\varepsilon \,\underline{r}_t}{32n},
  \]
  and
  \[
    |{\cal X}_t| \geq \frac{3\ln\left(2/ \frac{\delta}{2n^2}\right)}{(\frac{\varepsilon}{8n}/4)^2} = \frac{3072  \,n^2 \ln (4n^2 /\delta)}{\varepsilon^2 \underline{r}_t^2}. \eqno \qed
  \]
  
  \end{proof}

  \begin{lemma}\label{lem.rt.upper.bound}
  Suppose without loss of generality that $\bw = (w_1,\ldots,w_k)$ satisfies $w_1\leq w_2 \leq  \cdots \leq w_k$, that is $\wmin = w_1$. Then, for
    $\bset$ and $\bwset$ sets defined via \eqref{eq.x.exact}, it holds that:
   \[
     r = \frac{\bwsetcard}{\bsetcard} \geq \frac{1}{k+1}.
   \]
  \end{lemma}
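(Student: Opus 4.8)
The plan is to turn the ratio bound into a counting statement by exhibiting a bounded-to-one map from the ``shell'' $\bset \setminus \bwset$ into $\bwset$. Since $\bwset \subseteq \bset$, we may split $\bsetcard = \bwsetcard + \left| \bset \setminus \bwset \right|$, so it suffices to prove $\left| \bset \setminus \bwset \right| \leq k\,\bwsetcard$. This immediately gives $\bsetcard \leq (k+1)\,\bwsetcard$ and hence $r = \bwsetcard / \bsetcard \geq 1/(k+1)$, as desired. Throughout I work in the regime in which $\bwset$ is nonempty (equivalently $b \geq \wmin$, which is what occurs in the FPRAS application, since there $b-w_1$ equals the next level $\gamma_t \geq \treshold \geq 0$); this guarantees $\mathbf 0 \in \bwset$ and that the ratio is well defined.

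First I would construct the map. Fix $\bx \in \bset \setminus \bwset$, so that $b - w_1 < \sum_{i=1}^k w_i\, x_i \leq b$. Because $b - w_1 \geq 0$ in the relevant regime, the left inequality forces $\sum_i w_i x_i > 0$, so at least one coordinate of $\bx$ equals $1$. Let $j=j(\bx)$ be the smallest index with $x_j = 1$ and define $\psi(\bx)$ to be the vector obtained from $\bx$ by resetting coordinate $j$ to $0$. Here is where the ordering $w_1 = \wmin \leq w_j$ is used: the weight of $\psi(\bx)$ is $\sum_i w_i x_i - w_j \leq b - w_j \leq b - w_1$, so $\psi(\bx) \in \bwset$. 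Thus $\psi : \bset \setminus \bwset \to \bwset$ is well defined.

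Next I would bound the multiplicity of $\psi$. Any $\bx$ and its image $\psi(\bx)$ differ in exactly one coordinate, namely one that is $1$ in $\bx$ and $0$ in $\psi(\bx)$. Consequently, every preimage of a fixed $\by \in \bwset$ arises by flipping one of the (at most $k$) zero coordinates of $\by$ back to $1$, so $\left| \psi^{-1}(\by) \right| \leq k$. Summing over the fibres gives $\left| \bset \setminus \bwset \right| = \sum_{\by \in \bwset} \left| \psi^{-1}(\by) \right| \leq k\,\bwsetcard$, which completes the chain of inequalities above.

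The substance of the argument is entirely in the two verifications of the previous paragraphs — that $\psi$ lands in $\bwset$ (the only place the hypothesis $w_1=\wmin$ enters) and that $\psi$ is at most $k$-to-one — both of which are short. The main point requiring care is not a deep obstacle but rather the edge cases: one should check separately that the shell is allowed to be empty (in which case $\bset=\bwset$ and $r=1$), and that the regime $b \geq \wmin$ is indeed the one in which the lemma is invoked, since for $0 \leq b < \wmin$ one has $\bwset = \emptyset$ and the bound would be vacuous. Handling these boundary situations is routine bookkeeping and does not affect the core injection-style estimate.
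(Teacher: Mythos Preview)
Your argument is correct and a bit more direct than the paper's. The paper does not map the shell $\bset\setminus\bwset$ into $\bwset$; instead it partitions $\bset$ by the value of the first coordinate into $\bwsubset=\{\bx\in\bset:x_1=1\}$ and $\bsubset=\{\bx\in\bset:x_1=0\}$, then proves (i) a bijection $\bwsubset\leftrightarrow\bminussubset$ by toggling $x_1$, and (ii) $|\bsubset|\le(k-1)|\bwsubset|+1\le k\,|\bwsubset|$ via a swap map (delete one nonzero entry of $\bx\in\bsubset$ and set $x_1=1$, landing in $\bwsubset$). These combine to give $\bwsetcard\ge|\bminussubset|=|\bwsubset|$ and $\bsetcard=|\bwsubset|+|\bsubset|\le(k+1)|\bwsubset|$, hence the same $1/(k+1)$ bound. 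Your single $k$-to-one map $\psi$ collapses both steps into one, and your edge-case remark (working in the regime $b\ge\wmin$ so that shell elements are nonzero and $\mathbf 0\in\bwset$) parallels the paper's implicit need for $|\bwsubset|\ge1$ in the step $(k-1)|\bwsubset|+1\le k\,|\bwsubset|$. Both arguments are elementary bounded-to-one counting; yours avoids the auxiliary partition at no cost.
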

  
  \begin{proof}
  For any $b \in \R$ and $\bx=(x_1,\ldots,x_k)$, define a partition of $\bset$ via
  \[
    \bwsubset = \{\bx \in \scX_b \; : \; x_1 = 1  \}, \quad \text{and} \quad  \bsubset = \{\bx \in \scX_b \; : \; x_1 = 0  \}.
  \]
  Then, the following holds.
  \begin{enumerate}
    \item For any $\bx \in \bwsubset$, replace $x_1 = 1$ with  $x_1 = 0$, and note that the resulting vector is in $\bminussubset$ set, since its performance is at most $b-w_1$, that is $
    \left| \bwsubset \right| \leq \left| \bminussubset \right|$.
    Similarly, for any $\bx \in \bminussubset$, setting $x_1=1$ instead of $x_1=0$,  results in a vector which belongs to the $ \bwsubset$ set. That is:
    \beq\label{k.lemma.helper1}
      \left| \bwsubset \right| = \left|\bminussubset \right|.
    \eeq
    
  \item For any $\bx \in \bwsubset$, replace $x_1 = 1$ with  $x_1 = 0$ and note that the resulting vector is now in the $\bsubset$ set, that is  $|\bwsubset| \leq  |\bsubset|$.  In addition, for any $\bx \in \bsubset$, there are at most $k-1$ possibilities to replace $\bx$'s non-zero entry with zero and set $x_1=1$, such that the result will be in the $\bwsubset$ set. That is, $\left| \bsubset \right| \leq (k-1) \left|\bwsubset \right| +1$, (where $+1$ stands for the vector of zeros), and we arrive at
  \beq\label{k.lemma.helper2}
      \left| \bwsubset \right| \leq 	\left|\bsubset \right| \leq (k-1) 		\left| \bwsubset \right|+1 \leq \ k	\left| \bwsubset \right|.
    \eeq
  \end{enumerate}
  Combining \eqref{k.lemma.helper1} and \eqref{k.lemma.helper2}, we complete the proof by noting that
  \begin{align*}
    \frac{\bwsetcard}{\bsetcard}&= \frac{\left|\bminuswsubset \right| + \left|\bminussubset \right|}{\left|\bwsubset \right| + \left|\bsubset \right|} \geq \frac{ \left|\bminussubset \right|}{ \left|\bwsubset \right| + \left|\bsubset \right|}\\	
  & \underbrace{=}_{\eqref{k.lemma.helper1},\eqref{k.lemma.helper2}} \frac{ \left|\bwsubset \right|}{ \left|\bwsubset \right| + k \left|\bwsubset \right|} = \frac{1}{k+1}.  
  \end{align*}
  \qed
  \end{proof}

\end{document}